% 26.01.15 Dominik
% 24.06.15 Dominik
% 20.07.15 Dominik, Maja
% 27.07.15 Dominik
% 31.07.15 Dominik (final submission to AIMS)
% 26.08.15 Dominik 
% 03.09.15 Anton 
% 09.09.15 Dominik

\documentclass{AIMS}

  \usepackage{paralist}
\usepackage{amsmath,amssymb,graphicx,amsthm,mathrsfs}
 \usepackage[colorlinks=true]{hyperref}
\hypersetup{urlcolor=blue, citecolor=red}

  \textheight=8.2 true in
   \textwidth=5.0 true in
    \topmargin 30pt
     \setcounter{page}{1}

% The next 5 line will be entered by an editorial staff.

\newtheorem{theorem}{Theorem}[section]
\newtheorem{lemma}[theorem]{Lemma}
\newtheorem{proposition}[theorem]{Proposition}
\newtheorem{corollary}[theorem]{Corollary}
\newtheorem*{theore}{Theorem \ref*{generator}}
\theoremstyle{definition}
\newtheorem{definition}[theorem]{Definition}
\newtheorem{remark}[theorem]{Remark}
\newtheorem{assumption}[theorem]{Assumption}

\numberwithin{equation}{section}

\newcommand{\mh}{\ensuremath{\mathcal{H}}}
\newcommand{\A}{\ensuremath{\mathcal{A}}}
\newcommand{\HH}{\mathcal{H}}
\newcommand{\XX}{\mathcal{X}}
\newcommand{\ma}{\ensuremath{\mathcal{A}}}
\newcommand{\mb}{\ensuremath{\mathcal{B}}}
\newcommand{\mth}{\ensuremath{\tilde{\mathcal{H}}}}
\newcommand{\nn}{\mathcal N}
\newcommand{\C}{\mathbb{C}}
\newcommand{\R}{\mathbb{R}}

\newcommand{\N}{\mathbb{N}}
\newcommand{\Z}{\mathbb{Z}}
\newcommand{\dd}{\,\mathrm{d}}
\newcommand{\e}{\mathrm{e}}
\newcommand{\Real}{\operatorname{Re}}
\newcommand{\ii}{\mathrm{i}}
\newcommand{\inj}{\hookrightarrow}
\newcommand{\spn}{\operatorname{span}}
\newcommand{\la}{\langle}
\newcommand{\ra}{\rangle}
\newcommand{\laa}{\la\!\la}
\newcommand{\raa}{\ra\!\ra}

\newcommand{\ran}{\operatorname{ran}}
\newcommand{\Ker}{\operatorname{ker}}
\newcommand{\rom}[1]{\textrm{\tiny{\uppercase\expandafter{\romannumeral #1}}}}

\title[Euler-Bernoulli beam with a nonlin.~boundary control]{An Euler-Bernoulli beam with nonlinear damping and a nonlinear spring at the tip}

\author[M.~Mileti\'c, D.~St\"urzer and A.~Arnold]{}
\subjclass{35B40, 70K20, 74K10, 47H20, 35Q70}

\keywords{flexible beam, nonlinear spring, nonlinear damping, nonlinear semigroups, trajectory precompactness, asymptotic stability}

\email{mmiletic@asc.tuwien.ac.at}
\email{dominik.stuerzer@tuwien.ac.at}
\email{anton.arnold@tuwien.ac.at}

\thanks{This research was supported by the FWF-doctoral school ``Dissipation and dispersion in nonlinear partial differential equations'' and the FWF-project I395-N16. Two authors (AA, MM) acknowledge a sponsorship by \emph{Clear Sky Ventures}. The authors want to thank A.~Kugi for interesting discussions that led to the formulation of the model considered here.}

\begin{document}
\maketitle

% Enter the first author's name and address:
\centerline{\scshape Maja Mileti\'c, Dominik St\"urzer and Anton Arnold}
\medskip
{\footnotesize
% please put the address of the first author
 \centerline{Institute for Analysis and
 Scientific Computing}
 \centerline{Vienna University of Technology}
   \centerline{Wiedner
 Hauptstra\ss{}e 8}
   \centerline{1040 Vienna, Austria}
} % Do not forget to end the {\footnotesize by the sign }

%\medskip
%
%\centerline{\scshape First-name2 last-name2 and First-name3
%last-name3}
%\medskip
%{\footnotesize
% % please put the address of the second  and third author
% \centerline{ First line of the address of the second author}
%   \centerline{Other lines}
%   \centerline{Springfield, MO 65810, USA}
%}

\bigskip

% The name of the associate editor will be entered by an editorial staff
% "Communicated by the associate editor name" is not needed for special issue.
 \centerline{(Communicated by the associate editor name)}

%The abstract of your paper
\begin{abstract}
We study the asymptotic behavior for a system consisting of a clamped flexible beam that carries a tip payload, which is attached to a nonlinear damper and a nonlinear spring at its end. Characterizing the $\omega$-limit sets of the trajectories, we give a sufficient condition under which the system is asymptotically stable.  In the case when this condition is not satisfied, we show that the beam deflection approaches a non-decaying time-periodic solution.
\end{abstract}

%
%\title[]{An Euler-Bernoulli beam with nonlinear damping and a nonlinear spring at the tip}
%
%\author[M.~Mileti\'c]{Maja Mileti\'c} \address{Institute for Analysis and
% Scientific Computing, Technical University Vienna, Wiedner
% Hauptstra\ss{}e 8, A-1040 Vienna}
%\email{mmiletic@asc.tuwien.ac.at}
%
%\author[D.~St\"urzer]{Dominik St\"urzer} \address{Institute for Analysis and
% Scientific Computing, Technical University Vienna, Wiedner
% Hauptstra\ss{}e 8, A-1040 Vienna}
%\email{dominik.stuerzer@tuwien.ac.at}
%
%\author[A.~Arnold]{Anton Arnold} \address{Institute for Analysis and
% Scientific Computing, Technical University Vienna, Wiedner
% Hauptstra\ss{}e 8, A-1040 Vienna}
%\email{anton.arnold@tuwien.ac.at}
%
%
%
%\begin{abstract}
%We study the asymptotic behaviour for a system consisting of a clamped flexible beam that carries a tip payload, which is attached to a nonlinear damper and a nonlinear spring at its end. Characterizing the $\omega$-limit sets of the trajectories, we give a necessary condition under which the system is asymptotically stable.  In the case when this condition is not satisfied, we show that the beam deflection approaches a non-decaying time-periodic solution.
%\end{abstract}
%
%
%
%
%\subjclass[2010]{35B40,70K20,74K10,47H20, 35Q70 }
%\keywords{flexible beam, nonlinear spring, nonlinear damping, nonlinear semigroups, trajectory precompactness, asymptotic stability }
%\maketitle
%
%

\section{Introduction}
This article considers an Euler-Bernoulli beam where one end is clamped, and the free end holds a rigid tip mass. Models of this form play a fundamental role in many mechanical systems and thus occur in many applications such as flexible robot arms, helicopter rotor blades, spacecraft antennae, airplane wings, high-rise buildings, etc. An important issue is the suppression of vibrations, since undesired oscillations can reduce the performance of the system, or worse, result in damage to the structure. For this reason, the Euler-Bernoulli beam is often coupled with a boundary control, which acts on the tip and is used to dissipate the vibration. Frequently, the boundary control is realized as a suspension system, consisting typically of springs and dampers.

In the last four decades, considerable attention has been paid to the stability analysis of such systems in the literature, see e.g.~\cite{Littman:Markus,Guo,Guo:Wang,Arnold:Miletic}. Most results deal with the situation in which the control is linear, thus obtaining linear boundary conditions. In general, the respective stability analysis uses results from linear functional analysis. For a general theory on the large-time behavior of several mechanical systems with nonlinear damping in the evolution equation, we refer to \cite{chu_las_08, chu_las_10}. Classical references on attractors in (infinite-dimensional) dynamical systems are \cite{hale, temam}. However, the generalization to nonlinear boundary conditions is not straightforward in most cases, since they often require a model-dependent analysis. For the nonlinear boundary dissipation in wave and plate equations we refer to \cite{chu_las_02, chu_las_04, chu_las_07}, e.g. To the author's knowledge the only beam models considered in the literature with nonlinearities at the boundary do not comprise a rigid body attached to the tip (see for example \cite{Chentouf:Couchouron,  Conrad:Pierre,conrad_pierre_2, Coron:Novel}). A main goal of this article is to increase the analytical understanding of nonlinear beam models.

In this article we investigate an Euler-Bernoulli beam which is clamped at one end (see Figure \ref{system_visual}). The free end holds a tip mass, whose mass $m$ and moment of inertia $J$ are both positive. The controller acting on the tip consists of a spring and a damper, both nonlinear. On the one hand this model is rather simple, but it still exhibits an interesting mechanical behavior (asymptotic stability vs.~the existence of periodic orbits, depending on the value of $J>0$). On the other hand, its mathematical analysis requires a new strategy that deviates from existing techniques for nonlinear models. And the authors are confident that the analysis presented in this article can easily be extended to more complex models of this kind (see \cite{msak} for a follow-up work).

\begin{figure}[h]
	\includegraphics[trim = 40mm 205mm 30mm 25mm, clip, scale=0.82]{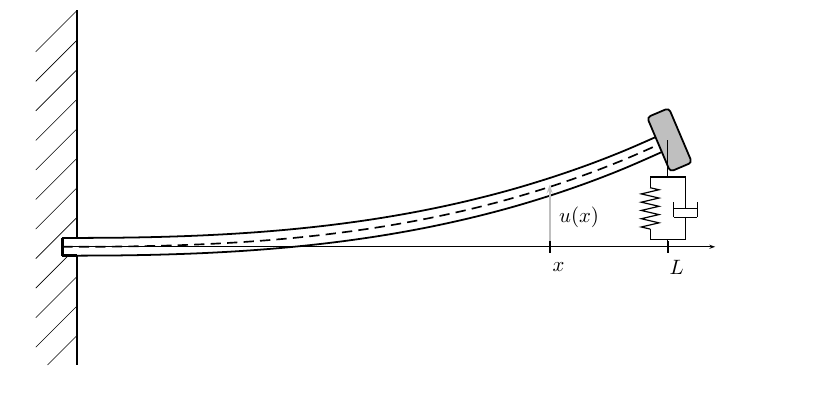}
	\caption{Clamped beam with tip mass, coupled to a spring and damper (both nonlinear)}
	\label{system_visual}
\end{figure}

%In \cite{Conrad:Pierre} the authors consider damping of an Euler-Bernoulli beam which is clamped at one end, and on the other end nonlinear feedback control point force and point bending moment are applied. Under the assumption that the nonlinear feedback is monotone dissipative, well-posedness and asymptotic stability are demonstrated. In \cite{Chentouf:Couchouron} it was shown for the same system that, under stronger conditions on feedback functions, exponential stability is ensured. 

Our beam model satisfies a linear PDE with high order nonlinear boundary conditions. In order to make the system accessible for analysis it is a common strategy to rewrite it as a nonlinear evolution equation in an appropriate (infinite-dimensional) Hilbert space $\mh$, and use the total energy of the system as a Lyapunov functional, see \cite{chu_lie_1,grabo,Kugi:Thull, Morgul2001,Villegas2009}.
%Sometimes, it is possible to directly prove asymptotic (and even exponential) stability of the system by using uniform estimates on the decay of the appropriate Lyapunov functional, cf.~\cite{morgul} for an application. 
In general, proving that every mild solution tends to zero as time goes to infinity consists of two steps, namely showing the precompactness of the trajectories and proving that the only possible limit is the zero solution. In the linear case, verifying the precompactness is straightforward by showing that the resolvent of the system operator is compact, see Remark~\ref{comp_lin} in Section~\ref{sec:4}. For the nonlinear case, the inspection of the precompactness property is more complex. In some situations, it can be shown that the (nonlinear) system operator is dissipative (in the sense of \cite{cra_pa}) and has compact resolvent, see for example \cite{conrad_pierre_2}. However, in our case, this operator is \emph{not} dissipative, and does not generate a semigoup of nonlinear contractions, see Remark~\ref{rem22} in Section~\ref{sec:3}.

A very common point of view for proving asymptotic stability of nonlinear evolution equations is to consider the system as a quasi-linear evolution equation. For this situation, the most commonly used criteria for the precompactness of trajectories can be found in \cite{daf_sle,Pazy2,Pazy3,Webb}, and further generalizations in \cite{Couchouron,Thieme_Vrabie}. They all split the system operator into the sum of two operators $A+\nn$ ($A$ being its linear, and $\nn$ its nonlinear part) and infer precompactness under the following conditions. In \cite{daf_sle} $A$ is required to be $m$-dissipative and $\nn$ applied to a trajectory is $L^1$ in time. In \cite{Pazy3} the requirement on $\nn$ is loosened by just assuming uniform local integrability of $\nn$ applied to a trajectory. However, the linear semigroup $(\e^{tA})_{t\ge 0}$ needs additionally to be compact in order to still ensure precompactness. Finally, in \cite{Webb} $\nn$ just needs to map into a compact set, but $A$ needs to generate an exponentially stable linear $C_0$-semigroup. These strategies have been successfully applied in the literature to the Euler-Bernoulli beam without tip payload and with nonlinear boundary control: in \cite{Conrad:Pierre} the precompactness of the trajectories follows directly from the $m$-dissipativity of the system operator, and in \cite{Chentouf:Couchouron} from the $L^1$-integrability of the nonlinearity.

In contrast to the mentioned  literature, the nonlinear boundary conditions considered in this article do not fall into any of these sets of assumptions. In our case $A$ shall be $m$-dissipative, but not compact and it does not generate an exponentially stable semigroup. On the other hand, $\nn$ apparently does not satisfy strong assumptions either, for it is compact, but we can not guarantee $L^1$-integrability. Thus the properties of our system operator are too weak in order to apply the mentioned standard results. However, we are still able to prove precompactness of the trajectories in a novel way. Hence, this article provides a new strategy for such evolution problems.

In this article we show that, for the Euler-Bernoulli beam with tip mass, coupled to a nonlinear spring and a nonlinear damper, all trajectories that are $C^1$ in time are precompact. Furthermore, for almost all values of the moment of inertia $J>0$ the trajectories tend to zero as time goes to infinity. Interestingly we find that, for countably many values of $J$, the trajectories tend to a time-periodic solution (see our main result, Theorem~\ref{generator}, for the detailed formulation). For given initial conditions we are able to characterize this asymptotic limit explicitly, including its phase. Such periodic limiting orbit appears when the (linear) beam equation has an eigenfunction with a node at the free end (i.e.~some $u_n(L)=0$). Then, the controller at the tip is inactive for all time.

A possible application of the method developed here is the nonlinear extension of the linear theory in \cite{BHG}, describing a model for a flexible micro-gripper used for DNA manipulation (the DNA-bundle model consists of a damper, spring and a load). Studying the stability of the system, when nonlinear phenomena for the controller and DNA-bundle are included, is a goal for future research  set  by the authors.

The paper is organized as follows. In Section \ref{sec:1} the equations of motion are derived for the system consisting of the Euler-Bernoulli beam with tip mass, connected to a nonlinear spring and damper. Next, it is shown that the energy functional is an appropriate Lyapunov function for the system. Section \ref{sec:3} is concerned with the formulation of the problem in an appropriate functional analytical setting and the investigation of existence and uniqueness of the corresponding solutions. There we also present our main result. In Section \ref{sec:4} we prove precompactness of the trajectories for all initial conditions lying in a dense subset of the underlying Hilbert space. Section \ref{sec:5} deals with the characterization of possible $\omega$-limit sets, proving that any classical solution tends either to zero or to a periodic solution, depending on the prescribed value $J$.

\section{Preliminaries and derivation of the model} \label{sec:1}

For a function $u(t,x)$, $t\ge 0$, $x\in [0,L]$ for some $L>0$, we use the notation $u_t$ for the derivative with respect to the time variable $t$, and we write $u'$ for the $x$-derivative. Higher order $x$-derivatives are also denoted by roman superscripts. Whenever it is clear from the context, we omit the time variable in the notation and write for example $u(L)\equiv u(t,L)$ and $u(0)\equiv u(t,0)$. 
If not stated otherwise all functions occurring in this article are considered to be real valued, and only real valued solutions are sought. Therefore, in addition to the Hilbert spaces $L^2(0,L)$ and $H^k(0,L)$, which are understood to consist of complex valued functions, we also need \[L^2_\R(0,L):=\{f\in L^2(0,L): f\text{ is real valued}\},\] and analogously we define $H^k_\R(0,L)$.

A linear operator $A$ is a closed, linear map $A:\XX\to \XX$, where $\XX$ is a suitable real or complex Hilbert space. The operator $A$ is defined on the domain $D(A)$ which needs to be dense in $\XX$. The range of $A$ is $\ran A\subset \XX$. A closed linear subset $X$ of a Hilbert space $\XX$ is called $A$-invariant if $X\cap D(A)\subset X$ is dense and $\ran A|_X\subset X$. 
%Depending on the context we sometimes consider operators on real Hilbert spaces. However, in order to investigate spectral properties of linear operators we need to consider them on {\em complex} spaces instead.

Throughout this article $C$ denotes a positive constant, not necessarily always the same.

\medskip

For the derivation of the mechanical model we follow \cite{Ge:Zhang:He} and \cite{Kugi:Thull}, whereby we assume that the beam satisfies the Euler-Bernoulli assumption. We assume that the beam has uniform mass per length $\rho > 0$ and length $L$. The beam is parametrized with $x \in [0, L]$, and is described by its deviation $u(t,x)$ from the horizontal (as depicted in Figure \ref{system_visual}). The constant bending stiffness is $\Lambda > 0$, and the tension is assumed to be zero. At the tip of the beam there is a payload of mass $m>0$, which has the moment of inertia $J>0$. We neglect friction of any kind. Only two external forces are assumed to act on the beam, both on the tip, perpendicular to the resting position $u\equiv 0$. The first comes from a nonlinear spring attached to the tip, producing the restoring force $-k_1(u(t,L))$. The second force is due to a nonlinear damping, and is given by $-k_2(u_t(t,L))$. Throughout the rest of the paper we shall make the following assumptions on the two nonlinearities:

\begin{assumption}\label{ass}
We assume $k_1,k_2 \in W^{2,\infty}_{\mathrm{loc}}(\R)$, and
\begin{align}
 \int_0^{z}{k_1(s) \dd s} \ge 0, & \:\:\: \forall z \in \mathbb{R},\label{assum_1} \\
  k_2'(z) \ge 0,\,\, k_2(0)=0, & \:\:\: \forall z \in \mathbb{R}. \label{assum_2}
\end{align}
Furthermore, we assume that
\begin{equation} \label{k_quadr}
|k_2(z)| \ge K z^2,\quad \forall z \in (-\delta,\delta),
\end{equation} for some positive constant $K>0$ and $\delta>0$ small. 
\end{assumption}
Notice that \eqref{assum_1} implies $k_1(0) = 0$, and \eqref{assum_2} together with \eqref{k_quadr} imply that $k_2(z)= 0$ iff $z= 0$.

\begin{remark}
The assumption $k_1,k_2\in W^{2,\infty}_{\mathrm{loc}}(\R)$ is needed in the proof of Lemma~\ref{reg_precompactness}, which specifically requires the $L^\infty$-boundedness of $k_1''$ and $k_2''$ on bounded sets.

In many situations the damping $k_2$ will originate from the drag produced by the flow of a fluid around the immersed tip. Hence, the dependence of the drag $k_2$ on the velocity of the tip will, in general, either be linear (\emph{Stokes drag}; for low Reynolds numbers), or quadratic (\emph{drag equation}; for high Reynolds numbers with turbulence behind the object), see \cite{fluid}. From this point of view, condition \eqref{k_quadr} is not restrictive.
\end{remark}

The equations of motion can be derived according to Hamilton's principle. Hence, they are the Euler-Lagrange equations corresponding to the action functional. In our model the kinetic energy $E_k$ and the potential (strain) energy $E_p$ are
\[E_k = \frac{\rho}{2} \int_0^L{u_t^2 \dd x} + \frac{m}{2}u_t(L)^2 + \frac{J}{2}u'_t(L)^2,\qquad E_p = \frac{\Lambda}{2} \int_0^L{(u'')^2 \dd x}.\]
Additionally, we have the virtual work $\delta W$ coming from the external forces:
\[\delta W =  - k_1(u(L))\delta u(L) - k_2(u_t(L)) \delta u(L).\]
Taking into account the boundary conditions $u(0)=u'(0)=0$ of the clamped end we find that, according to Hamilton's principle, $u$ satisfies:
\begin{subequations} \label{EBB_system}
\begin{align}
\rho u_{tt}(t,x) + \Lambda u^{\rom{4}}(t,x) & = 0,\\
u(t,0) = u'(t,0) & =  0, \\
-\Lambda u'''(t,L)+m u_{tt}(t,L)  & = - k_1(u(t,L)) - k_2(u_t(t,L)) , \\
\Lambda u''(t,L) + J u'_{tt}(t,L) & =  0, 
\end{align} 
\end{subequations}
where $(t,x)\in (0,\infty)\times (0,L)$.

\medskip

Finally we derive a candidate for a Lyapunov function: The total energy of the system is a natural candidate, since it will decrease in time because of the damping. The total energy is given by $E_\mathrm{tot} = E_{k} + E_{p} + E_{s}$, where $E_s := \int_0^{ u(L)}{k_1(s) \dd{}s}$ represents the potential energy stored in the nonlinear spring. Now \eqref{assum_1} ensures that this integral always stays non-negative.  We compute the time derivative of the total energy, using the Euler-Lagrange equations \eqref{EBB_system}:
\begin{align*}
\frac{\dd}{\dd t}E_\mathrm{tot} 
& =  \Lambda \int_0^Lu'' {u}''_t  \dd{}x  + \rho\int_0^L{u_{tt} {u}_{t} \dd{}x} + m  u_{tt}(L){u}_t(L) + J u'_{tt}(L) {u}'_{t}(L)  \\& \,\,\,\,+ k_1(u(L))u_t(L) \\
& =   \Lambda \int_0^L{u^{\rom{4}} {u}_{t}  \dd{}x}  + \Lambda u'' {u}'_{t}  \big|_0^L - \Lambda u''' {u}_{t} \big|_0^L + \rho\int_0^L{u_{tt} {u}_{t} \dd{}x} \\
&\quad+ m  u_{tt}(L){u}_t(L) + J u'_{tt}(L){u}'_t(L) + k_1(u(L))u_t(L)\\ 
& =  \Lambda u''(L) {u}'_{t}(L) - \Lambda u'''(L) {u}_{t}(L) + m u_{tt}(L) u_{t}(L) + J u'_{tt}(L) {u}'_{t}(L)\\& \,\,\,\,+ k_1(u(L))u_t(L) \\
& = -  k_2(u_t(L)) {u}_{t}(L).
\end{align*}
Due to \eqref{assum_2} this last line is always non-positive. So $E_{\mathrm{tot}}$ is a candidate for a Lyapunov function:
\begin{equation}\label{lyapunov}
 V(u) := \frac{\Lambda}{2} \int_0^L(u'')^2  \dd x  + 
\frac{\rho}{2} \int_0^L{u_{t}^2 \dd{}x} + \frac{m}{2} u_t(L)^2 + \frac{J}{2}u'_t(L)^2+\int_0^{ u(L)}{k_1(s) \dd{}s},
\end{equation}
and it is non-negative. According to the previous calculation its derivative along classical solutions of \eqref{EBB_system} satisfies:
\begin{equation} \label{Lyapunov_decay}
\frac{\dd}{\dd t} V(u(t)) = - k_2(u_t(t,L)) u_t(t,L)  \le 0.
\end{equation}

\section{Formulation as an evolution equation and main results} \label{sec:3}

The aim of this section is to show that, for sufficiently regular initial conditions $u(0,x)=u_0(x)$ and $u_t(0,x)=v_0(x)$, the system \eqref{EBB_system} has a unique (mild) solution $u(t,x)$.
First, we introduce the standard setting for the Euler-Bernoulli beam with a tip payload (see \cite{Kugi:Thull}, \cite{Arnold:Miletic}). To this end we define the following real Hilbert space:
\begin{equation}\label{def_mh}
\mh  
:= \{ y = [u, v, \xi, \psi]^\top \colon u \in \tilde H^{2}_{0,\R}(0, L), v \in L^{2}_\R
 (0,L),  \xi, \psi \in \R \},
\end{equation}
where $\tilde H^{n}_{0,\R}(0, L):=\{f \in H^{n}_\R(0, L): f(0)=f'(0)=0\}$ for $n\ge 2$. The space $\HH$ is equipped with the inner product
\[
 \langle y_1,  y_2  \rangle_\HH  :=  \frac\Lambda2 \int_{0}^{L} 
  u_1'' {u_2}'' \dd{}x + \frac{ \rho}{2} \int_{0}^{L} v_1 {v_2} \dd{}x + \frac{1}{2 J} \xi_1 {\xi_2} + \frac{1}{2 m} \psi_1
{\psi_2 },\quad \forall y_1,y_2\in\HH. \]
We next consider the following linear operator on $\HH$:
\begin{equation}\label{operator_A}
A:y\mapsto \begin{bmatrix}
 v \\ - \frac{\Lambda}{\rho} u^{\rom{4}} \\ - \Lambda u''(L) \\ \Lambda u'''(L) 
\end{bmatrix}
\end{equation}
on the dense domain
\begin{equation}\label{def_domain_A}
D(A) :=  \{y \in \mh \colon 
 u \in \tilde H^{4}_{0,\R}(0, L), v \in \tilde H^{2}_{0,\R} (0,L), \xi = J v'(L), \psi = m v(L) \}.
\end{equation}
Furthermore, we define the bounded nonlinear operator $\nn$ on $\HH$:
\begin{equation}\label{def_N}
\nn:y\mapsto\begin{bmatrix}
0\\0 \\ 0 \\ - k_1(u(L)) - k_2(\frac{\psi}{m})
\end{bmatrix}.
\end{equation}
\sloppy{Finally, we introduce the nonlinear operator $\A:=A+\nn$ on the domain ${D(\A)=D(A)}$. With this notation the system \eqref{EBB_system} can be written formally as the following nonlinear evolution equation in $\HH$:}
\begin{subequations} \label{ivp}
\begin{align}
y_t&=\ma y,\\
y(0)&=y_0,
\end{align}
\end{subequations}
for some initial condition $y_0 \in \mh$. A function $y(t)$  is a classical solution of \eqref{ivp} on $(0,T)$ if $y\in C^1((0,T);\HH)\cap C([0,T);\HH) $ and for all $t\in(0,T)$ there holds $y(t)\in D(\A)$ and \eqref{ivp}. A continuous function $y\in C([0,T);\HH)$ that satisfies the Duhamel formula
\begin{equation}\label{duhamel}
y(t)=\e^{tA}y_0+\int_0^t \e^{(t-s)A}\nn y(s)\dd s,\quad t\in(0,T),
\end{equation}
is called a mild solution of \eqref{ivp}, see \cite{Pazy}.

\begin{lemma}\label{lumphil}
Let $A$ be the operator defined in \eqref{operator_A}, and $\HH$ the Hilbert space \eqref{def_mh}. Then, $A$ generates a $C_0$-semigroup $(e^{tA})_{t \ge 0}$ of unitary operators in $\HH$.
\end{lemma}

The proof of Lemma~\ref{lumphil} is included in the proof of Lemma~\ref{semi_A} in the Appendix. The latter is an extension of Lemma~\ref{lumphil} to the complex analogue of $\HH$.

For the subsequent analysis we need to properly define a Lyapunov function on $\HH$ (as defined in \eqref{def_mh}). In the previous section we obtained a candidate by \eqref{lyapunov}, which was defined along classical solutions $u(t)$ of \eqref{EBB_system}. For $y=[u, v,\xi,\psi]^\top\in D(\A)$ (see \eqref{def_domain_A}) this can equivalently be written as 
\begin{equation}\label{def_V}
 V(y)= \frac{\Lambda}{2} \int_0^L(u'')^2  \dd x  + 
\frac{\rho}{2} \int_0^L{v^2 \dd{}x} + \frac{1}{2m} \psi^2 + \frac{1}{2J} \xi^2+\int_0^{ u(L)}{k_1(s) \dd{}s}. 
\end{equation}
This is then defined for all $y\in\HH$, and according to \eqref{Lyapunov_decay} there holds
\begin{equation}\label{lyap_dec_gnral}
\frac{\dd}{\dd t} V(y(t)) = - k_2\Big(\frac{\psi(t)} m\Big)\frac{\psi(t)} m  \le 0,
\end{equation}
along all classical solutions $y(t)$. The following two results are easily verified:

\begin{lemma}\label{V:cont}
Under the Assumptions~\ref{ass} the function $V \colon \mh \to \R$, defined in \eqref{def_V}, is continuous with respect to the norm $\|\cdot\|_\HH$. 
\end{lemma}
\begin{lemma}\label{v_bdd}
Let the Assumptions~\ref{ass} hold, and let $V$ be the functional from \eqref{def_V}, and $\HH$ the Hilbert space \eqref{def_mh}. Then, for any $Y \subset \mh$ we have:
\[\sup{\{ V( y ) \colon y \in Y\}} < \infty \quad \Leftrightarrow \quad \sup{\{\| y \|_{\mh} \colon y \in Y\}} < \infty\] 
\end{lemma}

\begin{proposition}\label{prop_exist}
If the Assumptions~\ref{ass} are satisfied, then for every $y_0\in\HH$ there exists a unique mild solution $y:[0,T_{\mathrm{max}}(y_0))\to\HH$ of \eqref{ivp}, where $T_{\mathrm{max}}(y_0)$ is the maximal time interval for which the solution exists. If $T_{\mathrm{max}}(y_0)<\infty$, then a blow-up occurs, i.e.~
\[\lim_{t \nearrow T_{\mathrm{max}}}{\|y(t)\|_{\mh}} = \infty.\]
\end{proposition}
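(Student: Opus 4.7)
The approach is to view \eqref{ivp} as a semilinear Cauchy problem whose linear part $A$ generates the unitary $C_0$-semigroup $(\e^{tA})_{t\ge 0}$ on $\HH$ by Lemma \ref{lumphil}, and whose perturbation $\nn$ is locally Lipschitz on $\HH$. Granted these two ingredients, the statement is a direct application of the standard local existence theory for semilinear evolution equations (see \cite{Pazy}, Chapter~6).

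The first step is to verify the local Lipschitz continuity of $\nn$. For $y=[u,v,\xi,\psi]^\top\in\HH$, only the last component of $\nn(y)$ is nontrivial, namely $-k_1(u(L))-k_2(\psi/m)$. The Sobolev embedding $\tilde H^2_{0,\R}(0,L)\inj C([0,L])$ yields $|u(L)|\le C\|u\|_{H^2(0,L)}\le C'\|y\|_\HH$, while $|\psi|\le C\|y\|_\HH$ is immediate from the definition of the inner product on $\HH$. Since $k_1,k_2\in C^2(\R)$, they are Lipschitz continuous on every bounded subset of $\R$. Combining these observations, for any $R>0$ there exists $L(R)>0$ such that
\[\|\nn(y_1)-\nn(y_2)\|_\HH\le L(R)\,\|y_1-y_2\|_\HH\qquad\text{whenever }\|y_i\|_\HH\le R.\]

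The second step is a Picard-type contraction argument applied to the Duhamel formulation \eqref{duhamel} in the Banach space $C([0,\delta];\HH)$. Local Lipschitz continuity of $\nn$ together with $\|\e^{tA}\|_{\HH\to\HH}=1$ (from unitarity) provides, for each $y_0\in\HH$, a time $\delta=\delta(\|y_0\|_\HH)>0$ on which the map $y\mapsto \e^{tA}y_0+\int_0^t\e^{(t-s)A}\nn y(s)\dd s$ is a strict contraction on a suitable ball. This yields a unique local mild solution, and iterating the construction produces a unique mild solution on a maximal interval $[0,T_{\max}(y_0))$. The blow-up alternative follows by contradiction: the local existence time $\delta$ depends only on $\|y_0\|_\HH$, so if $T_{\max}<\infty$ but $\|y(t_n)\|_\HH$ remained bounded along some sequence $t_n\nearrow T_{\max}$, then restarting the local construction at $t_n$ for $n$ large would extend the solution beyond $T_{\max}$, contradicting maximality; hence $\|y(t)\|_\HH\to\infty$ as $t\nearrow T_{\max}$.

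The argument is largely routine; the only substantive point is the Lipschitz estimate of the first step, which hinges on the Sobolev embedding to bound the point evaluation $u(L)$ by the $\HH$-norm of $y$. Once this is in place, no further difficulty arises and the conclusion can essentially be quoted from \cite{Pazy}.
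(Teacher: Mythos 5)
Your proposal is correct and follows essentially the same route as the paper: both reduce the statement to the local Lipschitz continuity of $\nn$ on $\HH$ (via the Sobolev embedding controlling the point evaluation $u(L)$ and the $C^2$ regularity of $k_1,k_2$) combined with the fact that $A$ generates a $C_0$-semigroup, and then invoke the standard semilinear theory of \cite{Pazy} (Theorem 6.1.4). The only difference is that you spell out the Picard contraction and the blow-up alternative, which the paper simply quotes.
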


\begin{proof}
Due to the Assumptions~\ref{ass} made on $k_1,k_2$ it follows that $\mathcal{N}$, given in \eqref{def_N}, is continuously differentiable on $\HH$, and thus locally Lipschitz continuous. Furthermore, $A$ generates a $C_0$-semigroup, see Lemma~\ref{lumphil}. Hence, according to Theorem 6.1.4 in \cite{Pazy} a unique mild solution exists on $[0, T_{\mathrm{max}}(y_0))$, for some maximal $0 < T_{\mathrm{max}}(y_0) \le  \infty$.
Moreover, if $T_{\mathrm{max}} <  \infty$ then $\lim_{t \nearrow T_{\mathrm{max}}}{\|y(t)\|_{\mh}} = \infty$.
\end{proof}

\begin{lemma}\label{lem_class}
If the Assumptions~\ref{ass} hold, and $y_0\in D(\A)$ (see \eqref{def_domain_A}), then the corresponding mild solution $y(t)$ of \eqref{ivp} is a classical solution. Furthermore $y(t)$ is a global solution, i.e.~$T_{\mathrm{max}}(y_0)=\infty$.
\end{lemma}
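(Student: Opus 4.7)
The proof breaks into two independent parts: regularity of the mild solution, and ruling out blow-up.

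For the first part, I would invoke a standard regularity result for semilinear evolution equations (e.g.~Theorem 6.1.5 in \cite{Pazy}): if $A$ generates a $C_0$-semigroup on $\HH$, $\nn:\HH\to\HH$ is continuously (Fr\'echet) differentiable, and $y_0\in D(A)$, then the mild solution lies in $C^1([0,T_{\mathrm{max}});\HH)$ with $y(t)\in D(A)$ and $y_t(t)=Ay(t)+\nn y(t)=\A y(t)$. Lemma \ref{lumphil} supplies the semigroup property, and $D(\A)=D(A)$ by construction, so the only point that needs checking is that $\nn$ is $C^1$ on $\HH$. This follows from $k_1,k_2\in C^2(\R)$ together with the fact that $\nn y$ depends on $y=[u,v,\xi,\psi]^\top$ only through the bounded linear trace $u\mapsto u(L)$ from $\tilde H^2_{0,\R}(0,L)$ to $\R$ and through the scalar $\psi/m$; the chain rule then yields continuous differentiability.

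For the second part, I would use the Lyapunov function $V$ defined on $\HH$ just after Lemma \ref{lumphil}. Since the mild solution $y(t)$ with $y_0\in D(\A)$ is classical by the first step, the decay identity \eqref{lyap_dec_gnral} applies pointwise on $[0,T_{\mathrm{max}}(y_0))$, giving
\[
V(y(t))\le V(y_0)<\infty,\qquad t\in[0,T_{\mathrm{max}}(y_0)),
\]
where finiteness of $V(y_0)$ comes from Lemma \ref{V:cont}. Lemma \ref{v_bdd} then upgrades this to
\[
\sup_{t\in[0,T_{\mathrm{max}}(y_0))}\|y(t)\|_\HH<\infty.
\]
The blow-up alternative in Proposition \ref{prop_exist} states that $T_{\mathrm{max}}(y_0)<\infty$ would force $\|y(t)\|_\HH\to\infty$; the above uniform bound contradicts this, hence $T_{\mathrm{max}}(y_0)=\infty$.

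Neither step is really an obstacle: the whole point of having built the Lyapunov function and recorded the blow-up alternative beforehand is to make this lemma almost automatic. The one place where some care is needed is verifying $C^1$-regularity of $\nn$ on $\HH$ (as opposed to mere local Lipschitz continuity, which was already used to get mild solutions in Proposition \ref{prop_exist}); this is why the hypotheses $k_1,k_2\in C^2(\R)$ rather than $C^1$ were imposed in Section \ref{sec:1}.
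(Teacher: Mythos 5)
Your proof is correct and follows essentially the same route as the paper: Theorem 6.1.5 of Pazy for classical regularity (with $\nn\in C^1$ already noted in the proof of Proposition \ref{prop_exist}), then the Lyapunov decay \eqref{Lyapunov_decay} combined with Lemma \ref{v_bdd} to exclude blow-up. The extra detail you give on why $\nn$ is continuously differentiable is a welcome, but not substantively different, elaboration.
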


\begin{proof}
Since $\mathcal{N}$ is continuously differentiable (because of the Assumptions~\ref{ass}), Theorem 6.1.5 in \cite{Pazy} implies that $y(t)$ is a classical solution.
Therefore \eqref{lyap_dec_gnral} holds and implies: 
\[V(y(t)) \le V(y_0), \qquad\forall t\in [0, T_{\mathrm{max}}),\]
with $V$ from \eqref{def_V}. Thus, according to Lemma \ref{v_bdd}, the norm $\|y(t)\|_{\mh}$ stays uniformly bounded on $[0, T_{\mathrm{max}})$. Consequently, no blowup occurs and $T_{\mathrm{max}} =  \infty$.
\end{proof}

\noindent{}The following result is a consequence of Proposition 4.3.7 of \cite{Cazenave:Haraux}:
\begin{proposition} \label{approx_solutions}
Let the Assumptions~\ref{ass} hold, and let $y :[0,T) \rightarrow \mh$ be a mild solution of \eqref{ivp} for some $y_0 \in \mh$, and $0<T \le \infty$. Also, let $\{y_{n,0}\}_{n \in \mathbb{N}} \subset D(\mathcal{A})$ be such that $y_{n,0} \rightarrow y_0$ in $\HH$. For every $n\in\N$, denote by $y_n(t)$ the (global) classical solution of \eqref{ivp} to the initial value $y_{n,0}$. Then $y_n \rightarrow y$ in $C([0,T); \mh)$ as $n\to\infty$.
\end{proposition}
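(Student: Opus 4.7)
The plan is to obtain the convergence $y_n\to y$ by a Grönwall estimate applied to the Duhamel formula, once we have secured a uniform bound on $\|y_n(t)\|_\HH$ on the relevant time interval so that the Lipschitz constant of $\nn$ can be controlled.

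First I fix an arbitrary $T'\in(0,T)$. Since $y\in C([0,T);\HH)$, the set $y([0,T'])$ is compact, hence $\|y(t)\|_\HH\le R$ on $[0,T']$ for some $R>0$. To get the analogous bound for the approximating classical solutions $y_n$, I invoke the Lyapunov structure: by Lemma \ref{lem_class} each $y_n$ is a global classical solution, and \eqref{lyap_dec_gnral} gives $V(y_n(t))\le V(y_{n,0})$ for all $t\ge 0$. Since $y_{n,0}\to y_0$ in $\HH$ and $V$ is continuous (Lemma \ref{V:cont}), the sequence $V(y_{n,0})$ is bounded. Then Lemma \ref{v_bdd} yields some $R'>0$ with $\|y_n(t)\|_\HH\le R'$ for every $n$ and every $t\ge 0$. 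Setting $\tilde R:=\max(R,R')$, both $y(t)$ and $y_n(t)$ stay in the closed ball $\overline{B_{\tilde R}(0)}\subset\HH$ on $[0,T']$.

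Since $\nn$ is continuously differentiable on $\HH$, it is Lipschitz continuous on $\overline{B_{\tilde R}(0)}$ with some constant $L=L(\tilde R)$. The mild solution formula \eqref{duhamel} applied to $y$ and $y_n$, together with the fact that $(e^{tA})_{t\ge 0}$ is a semigroup of unitary operators (Lemma \ref{lumphil}), gives
\begin{equation*}
\|y(t)-y_n(t)\|_\HH\le \|y_0-y_{n,0}\|_\HH+\int_0^t\|\nn y(s)-\nn y_n(s)\|_\HH\,\dd s\le \|y_0-y_{n,0}\|_\HH+L\int_0^t\|y(s)-y_n(s)\|_\HH\,\dd s
\end{equation*}
for all $t\in[0,T']$. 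Grönwall's inequality then yields
\begin{equation*}
\sup_{t\in[0,T']}\|y(t)-y_n(t)\|_\HH\le \|y_0-y_{n,0}\|_\HH\,e^{LT'}\xrightarrow{n\to\infty}0,
\end{equation*}
which is the desired convergence in $C([0,T'];\HH)$. As $T'<T$ was arbitrary, this proves convergence in $C([0,T);\HH)$.

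The only conceptual obstacle is the uniform boundedness of the approximating sequence: without it, the Lipschitz constant of $\nn$ would depend on $n$ and Grönwall's lemma would be useless. Crucially, this is not obtained from an abstract continuous-dependence result but from the energy/Lyapunov estimate specific to our system, which is why invoking Lemmas \ref{V:cont} and \ref{v_bdd} together with the monotonicity \eqref{lyap_dec_gnral} along classical solutions is the key step. Everything else reduces to the standard Duhamel-plus-Grönwall argument underlying Proposition 4.3.7 of \cite{Cazenave:Haraux}.
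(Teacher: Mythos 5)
Your proof is correct, but note that the paper does not prove this statement at all: it simply cites Proposition 4.3.7 of \cite{Cazenave:Haraux}, a general continuous-dependence result for semilinear equations with locally Lipschitz nonlinearity. Your argument is the standard Duhamel-plus-Gr\"onwall proof underlying that citation, with one system-specific shortcut: the uniform-in-$n$ bound on $\|y_n(t)\|_\HH$ is extracted from the Lyapunov estimate $V(y_n(t))\le V(y_{n,0})$ together with Lemmas \ref{V:cont} and \ref{v_bdd}, whereas the general theorem obtains the analogous bound by a local-in-time bootstrap keeping $y_n$ in a tube around the compact set $y([0,T'])$. Your route buys a shorter, self-contained argument (and even a bound valid for all $t\ge0$, not just near the limit trajectory); the cited route buys generality, since it needs no dissipativity. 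One small point worth making explicit: continuous differentiability of $\nn$ alone does not give a uniform Lipschitz constant on a closed ball of the infinite-dimensional space $\HH$ (such balls are not compact); here it does because $\nn$ factors through the two bounded linear functionals $y\mapsto u(L)$ and $y\mapsto\psi$, so on a ball its arguments range over a compact interval of $\R$ on which $k_1',k_2'$ are bounded. With that remark added, the Gr\"onwall step is fully justified, and the conclusion is locally uniform convergence on $[0,T)$, which is the intended topology of $C([0,T);\mh)$.
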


\begin{theorem} \label{global_existence}
Let the Assumptions~\ref{ass} be satisfied. Then, for every $y_0 \in \mh$ the initial value problem \eqref{ivp} has a unique global mild solution $y(t)$, which is classical if $y_0\in D(\A)$. Moreover, the function $t \mapsto V(y(t))$, defined in \eqref{def_V}, is non-increasing, and $\|y(t)\|_{\mh}$ is uniformly bounded on $\R^+_0$.
\end{theorem}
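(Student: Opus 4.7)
The plan is to handle the case $y_0 \in D(\A)$ first (already covered by Lemma \ref{lem_class}) and extend to arbitrary $y_0 \in \mh$ by a density/continuity argument built on Proposition \ref{approx_solutions}. Concretely, I would pick any sequence $\{y_{n,0}\}_{n\in\N} \subset D(\A)$ with $y_{n,0} \to y_0$ in $\mh$, which exists since $D(\A)$ is dense. By Lemma \ref{lem_class} each $y_n$ is a global classical solution, so \eqref{lyap_dec_gnral} gives $V(y_n(t)) \le V(y_{n,0})$ for all $t \ge 0$. Lemma \ref{V:cont} yields $V(y_{n,0}) \to V(y_0)$, hence $C := \sup_n V(y_{n,0}) < \infty$, and Lemma \ref{v_bdd} then produces a uniform bound $\|y_n(t)\|_{\mh} \le M$ for all $n$ and all $t \ge 0$.

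Next I would upgrade the local mild solution produced by Proposition \ref{prop_exist} to a global one via the blow-up alternative. Suppose for contradiction that $T_{\max}(y_0) < \infty$. Fix any $T \in (0, T_{\max}(y_0))$. Proposition \ref{approx_solutions} gives $y_n \to y$ in $C([0,T]; \mh)$, so the uniform-in-$n$ bound transfers to the limit: $\|y(t)\|_{\mh} \le M$ for every $t \in [0,T]$. Since $T < T_{\max}(y_0)$ was arbitrary, this forces $\|y(\cdot)\|_{\mh}$ to remain bounded up to $T_{\max}(y_0)$, contradicting $\lim_{t \nearrow T_{\max}} \|y(t)\|_{\mh} = \infty$. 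Hence $T_{\max}(y_0) = \infty$, which settles existence, uniqueness, and globality in $\mh$.

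Finally, to transfer the Lyapunov monotonicity to mild solutions, I would fix $0 \le t_1 \le t_2$ and apply Proposition \ref{approx_solutions} on $[0,t_2]$: this gives $y_n(t_i) \to y(t_i)$ in $\mh$ for $i=1,2$. Since each $y_n$ is classical, $V(y_n(t_2)) \le V(y_n(t_1))$, and Lemma \ref{V:cont} lets me pass to the limit to conclude $V(y(t_2)) \le V(y(t_1))$. Uniform boundedness of $\|y(t)\|_{\mh}$ on $[0,\infty)$ then follows from $V(y(t)) \le V(y_0)$ and Lemma \ref{v_bdd}.

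I do not expect a deep obstacle; the argument is essentially a density-plus-continuity routine relying on the machinery already assembled. The most delicate point is the globality step: Proposition \ref{approx_solutions} only asserts convergence on sub-intervals of $[0,T_{\max}(y_0))$, so one cannot pass the uniform bound on the classical trajectories $y_n$ directly onto the whole life-span of $y$. The right way to close this gap is precisely the blow-up alternative in Proposition \ref{prop_exist}, which converts a bound valid on every $[0,T] \subset [0,T_{\max})$ into the statement $T_{\max} = \infty$.
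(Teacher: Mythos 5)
Your proposal is correct and follows essentially the same route as the paper: approximate $y_0$ by a sequence in $D(\A)$, use Lemma \ref{lem_class} and \eqref{Lyapunov_decay} to get monotonicity of $V$ along the classical approximants, pass to the limit via Proposition \ref{approx_solutions} and Lemma \ref{V:cont}, and invoke Lemma \ref{v_bdd} together with the blow-up alternative of Proposition \ref{prop_exist} to conclude $T_{\mathrm{max}}(y_0)=\infty$. The only difference is cosmetic ordering (you establish the uniform bound before globality, the paper establishes monotonicity of $V(y(t))$ on $[0,T_{\mathrm{max}})$ first and then rules out blow-up), so no further comment is needed.
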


\begin{proof}
Due to the previous results it remains to show the result for $y_0\notin D(\A)$. For an approximating sequence of classical solutions $\{y_n\}_{n \in \mathbb{N}}$ as in Proposition \ref{approx_solutions}, it follows that 
\[V(y(t)) = \lim_{n \rightarrow \infty}{V(y_n(t))}, \qquad \forall t \in [0, T_{\mathrm{max}}(y_0)), \] since $V$ is continuous.
Due to \eqref{Lyapunov_decay}, we know for the classical solution $y_n(t)$ that $t \mapsto V(y_n(t))$ is non-increasing for each fixed $n \in \mathbb{N}$, i.e.
\[V(y_n(t_1)) \ge V(y_n(t_2)), \qquad 0 \le t_1 \le t_2. \]
Letting $n \to\infty$ in this inequality shows that $t \mapsto V(y(t))$ is as well non-increasing on $[0, T_{\mathrm{max}})$.
%On the other hand, if we assume that $y(t)$ has a blow-up at $t=T_{\mathrm{max}}(y_0)$, then there exists $t^{\ast} < T_{\mathrm{max}}(y_0)$ such that $V(y(t^{\ast})) > M$, for some $M > V(y_0)$. 
%This is a contradiction, therefore $T_{\mathrm{max}}(y_0) = + \infty$.
Hence no blow-up can occur, and thus $T_{\mathrm{max}}(y_0) = \infty$.
\end{proof}

\begin{remark}\label{rem22}
If we assume that $k_1$ is linear, i.e.~$k_1(u(L))=K_1\cdot u(L)$ for some ${K_1>0}$, and Assumptions~\ref{ass} hold, then the (still) nonlinear operator $\A$ is dissipative in $\HH$, with respect to the modified inner product
\[
 \langle y_1,  y_2  \rangle_{\HH,2}  :=  \frac\Lambda2 \int_{0}^{L}\!\! 
  u_1'' {u_2}'' \dd{}x + \frac{ \rho}{2} \int_{0}^{L}\!\! v_1 {v_2} \dd{}x + \frac{\xi_1 {\xi_2}}{2 J}  + \frac{\psi_1
{\psi_2 }}{2 m} +\frac{K_1\Lambda}2 u_1(L)u_2(L). \]
Then, $\A$ even generates a semigroup of nonlinear contractions (cf.~\cite{cra_pa}). In this case, the asymptotic stability of the semigroup is shown more easily, see Remark~\ref{comp_lin}. However, if $k_1$ is nonlinear we cannot find a formulation of \eqref{EBB_system} such that the system operator becomes dissipative.
\end{remark}

\begin{definition}
We define the following generalized time derivative of $V$, as defined in \eqref{def_V}, for the mild solution $y(t)$ of \eqref{ivp} to the initial value $y_0 \in \mh$:
\[\dot{V}(y_0) := \limsup_{t \searrow 0}{\frac{V(y(t)) - V(y_0)}{t}}, \] 
which may take the value $- \infty$. 
\end{definition}

\begin{corollary}\label{cor:4.9}
Under the Assumptions~\ref{ass}, the function $V \colon \mh \rightarrow \mathbb{R}$, defined in \eqref{def_V}, is a Lyapunov function for the initial value problem \eqref{ivp}.
\end{corollary}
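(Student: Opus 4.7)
The plan is to verify the three standard defining properties of a Lyapunov function for the evolution equation \eqref{ivp}, namely: continuity of $V$ on $\mh$; non-negativity (more generally, boundedness below) of $V$ on $\mh$; and non-positivity of the generalized derivative $\dot V(y_0) \le 0$ for every $y_0 \in \mh$. Each of these is by now a short bookkeeping step drawing on earlier results.

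First, continuity of $V : \mh \to \R$ with respect to $\|\cdot\|_\HH$ is exactly the content of Lemma \ref{V:cont}. Non-negativity follows straight from the definition \eqref{lyapunov}: the kinetic and bending terms are manifestly non-negative, and the spring-energy contribution $\int_0^{u(L)} k_1(s)\dd s$ is non-negative by assumption \eqref{assum_1}. So the only property requiring real argument is the decay.

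For the infinitesimal decay, the crucial input is that Theorem \ref{global_existence} already guarantees that $t \mapsto V(y(t))$ is non-increasing along \emph{every} mild solution, not merely along classical solutions. Given $y_0 \in \mh$ and the corresponding mild solution $y(t)$, this monotonicity gives $V(y(t)) - V(y_0) \le 0$ for all $t > 0$, hence $[V(y(t))-V(y_0)]/t \le 0$, and taking $\limsup_{t \searrow 0}$ yields $\dot V(y_0) \le 0$ directly from the definition of $\dot V$.

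There is essentially no obstacle in this corollary; it is a direct compilation of Lemma \ref{V:cont}, assumption \eqref{assum_1}, and Theorem \ref{global_existence}. The only point worth underlining is that the passage from classical to mild solutions in the monotonicity statement has already been handled inside Theorem \ref{global_existence} via the approximation argument built on Proposition \ref{approx_solutions}, so no additional density or approximation argument needs to be deployed here.
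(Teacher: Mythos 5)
Your proof is correct and follows essentially the same route as the paper: continuity is cited from Lemma \ref{V:cont}, and the decay property is deduced from the monotonicity of $t \mapsto V(y(t))$ along all mild solutions established in Theorem \ref{global_existence}. Your additional remarks on non-negativity and on unwinding the definition of $\dot V$ merely make explicit what the paper leaves implicit.
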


\begin{proof}
Due to Lemma \ref{V:cont} $V$ is continuous, and according to Theorem \ref{global_existence}, we know that $t \mapsto V(y(t))$ is non-increasing for all $y_0 \in \mh$. This proves the statement.
\end{proof}

For every $y_0 \in \mh$ we define $S(t)y_0 := y(t)$, for all $t \ge 0$, where $y(t)$ is the mild solution of \eqref{ivp} corresponding to the initial condition $y_0$. Theorem 9.3.2 in \cite{Cazenave:Haraux} implies that the family $S \equiv \{ S(t) \}_{t \ge 0}$ is a strongly continuous semigroup of nonlinear continuous operators in $\mh$.

In the remaining part of the paper we investigate the asymptotic stability of the nonlinear semigroup $S$. As it turns out, the semigroup is asymptotically stable ``in most cases'', i.e.~for all but countably many values of the parameter $J$. For these exceptional values of $J$, there exist non-trivial solutions which are periodic in time and do not decay, see Lemma~\ref{ef_boundary} in Section~\ref{sec:5}. The set $\mathscr J$ of exceptional values of $J$ is explicitly given by 
\begin{equation}\label{def_J}
\mathscr J:=\Big\{ \rho \left( \frac{L}{\ell \pi} \right)^3 \frac{(-1)^\ell + \cosh{\ell \pi} }{\sinh{\ell \pi}}:\ell\in\N\Big\}.
\end{equation}
We denote the $\ell$-th entry by $J_\ell$. Now we can formulate the main result of this paper:

\begin{theorem}\label{generator}
Assume that the Assumptions~\ref{ass} hold for the nonlinearities $k_1,k_2$. Let $y_0\in D(\A)$, let $y(t)$ denote the corresponding classical solution of \eqref{ivp}, and let $\mathscr J$ be the set from \eqref{def_J}. Then there holds:
\begin{enumerate}
      \renewcommand{\theenumi}{\roman{enumi}}
\renewcommand{\labelenumi}{(\theenumi)}
\item If $J\notin\mathscr J$, then the system \eqref{ivp} is asymptotically stable with respect to $\|\cdot\|_{\HH}$, i.e.~
\[\lim_{t \rightarrow \infty}{y(t)} = 0.\]
\item If $J\in\mathscr J$, then  $y(t)$ approaches (with respect to $\|\cdot \|_\HH$) the time-periodic solution corresponding to the initial condition $\Pi^*y_0$ as $t\to\infty$. Here, $\Pi^*$ is the orthogonal projection from $\HH$ onto $\Omega$, and it is given by
\begin{equation}\label{pi_ex}
\Pi^* y=\begin{bmatrix}
\Lambda \la u'' ,u_{n^*}''\ra_{L^2}\,u_{n^*}\\
|\mu_{n^*}|^2\big(\rho\la v,u_{n^*}\ra_{L^2}+\xi u_{n^*}'(L)\big)u_{n^*}\\
J|\mu_{n^*}|^2\big(\rho\la v,u_{n^*}\ra_{L^2}+\xi u_{n^*}'(L)\big)u_{n^*}'(L)\\
0
\end{bmatrix},
\end{equation}
where $\la .,.\ra_{L^2}$ denotes the standard inner product on $L^2(0,L)$. $u_{n^*}$ is the (normalized) solution of equation \eqref{ef_boundary}, see also Lemma~\ref{eigen_eqn}. $\Omega$ is the set of all trajectories along which $V$ is constant, see \eqref{def_Omega}.
\end{enumerate}
\end{theorem}

The proof of Theorem~\ref{generator} can be found at the end of Section~\ref{sec:5}, and it is based on the results developed in the Sections~\ref{sec:4}--\ref{sec:5}.

\section{Precompactness of the trajectories}\label{sec:4}

In this section we investigate the precompactness of the trajectories of \eqref{ivp}. Thus, for given $y_0\in \HH$ the corresponding trajectory $\gamma(y_0)\subset\HH$ is defined by
\[\gamma(y_0):=\bigcup_{t\ge 0}S(t)y_0,\]
where $S$ is the semigroup defined after Corollary~\ref{cor:4.9}.

First, we prove the precompactness of trajectories that are twice differentiable (in time), and then extend this result to all classical solutions.

\begin{lemma} \label{reg_precompactness}
Under the Assumptions~\ref{ass}, the trajectory $\gamma(y_0)$ of \eqref{ivp} is precompact for every $y_0 \in D(\mathcal{A})$. 
\end{lemma}

\begin{proof}
We fix $y_0\in D(\ma)$ and show that the corresponding trajectory $\gamma(y_0)$ is precompact in $\HH$. As seen in Lemma \ref{lem_class}, the solution corresponding $y(t)$ is classical. Due to the compact embeddings $H^4(0,L)\inj\inj H^2(0,L)\inj\inj L^2(0,L)$, for the precompactness of $\gamma(y_0)$ it is sufficient to show that
\[\sup_{t>0}\|\ma y(t)\|_\HH< \infty.\]
Since $y_t=\ma y$, this is equivalent to show that $\|y_t(t)\|_\HH$ is uniformly bounded for $t>0$.

\underline{\emph{Step 1:}}  In the first part of this proof we assume that $y_0 \in D(\ma^2)$.
According to Lemma \ref{da2}, the time derivative $y_t(t)$ of the corresponding solution is a classical solution of the system \eqref{EBB_system} differentiated  in time once:
\begin{subequations}  \label{system_time_der}
\begin{align}
   \rho  u_{ttt}  + \Lambda u^{\rom{4}}_t & =  0, \\  
 u_t (t,0) & =  0, \label{time_derb}\\ 
  u'_t (t,0) & =  0, \label{time_derc}\\ 
m  u_{ttt} (L) - \Lambda u'''_t (L) + k_1'(u(L))  u_t(L) + k_2'(u_t(L))  u_{tt}(L) &  =  0, \label{time_derd}\\  
 J   u'_{ttt} (L) + \Lambda u''_t (L) &  =  0.\label{time_dere}
\end{align}
\end{subequations}
We now evaluate the time derivative of $ V(y_t)$:
\begin{align}\label{lyapunov_time_der}
 \frac{\dd}{\dd t}V(y_t) & =  \Lambda \int_0^L u''_{tt}u''_t\dd x+ \rho \int_0^L\!\!\! u_{ttt} u_{tt}\dd x + J u'_{ttt}(L) {u}'_{tt}(L)  \nonumber \\ &\quad+ m u_{ttt}(L){u}_{tt}(L)+ k_1(u_t(L)) u_{tt}(L)\nonumber \\
& =  {u}_{tt}(L) \big(m u_{ttt}(L)  - \Lambda u'''_t(L) + k_1(u_t(L))\big) \\
&\quad +  {u}'_{tt}(L) \big(  \Lambda u''_t(L)  + J u'_{ttt}(L)  \big) \nonumber  \\
& = {u}_{tt}(L) \big( k_1(u_t(L)) - k_1'(u(L))  u_t(L) - k_2'(u_t(L)) u_{tt}(L) \big),\nonumber 
\end{align}
where we have performed partial integration in $x$ twice and used \eqref{time_derb}-\eqref{time_dere}.
We have due to \eqref{assum_2} \[- k_2'(u_t(L)) u_{tt}(L)^2 \le 0, \qquad \forall t\ge 0, \]
so after integration of \eqref{lyapunov_time_der} in time we obtain
\begin{equation}\label{int_lyap}
V(y_t(t))  \le  V(y_t(0)) + \int_0^t{  u_{tt}(\tau, L) \left[ k_1(u_t(\tau,L)) - k_1'(u(\tau, L)) u_t(\tau, L)\right]\dd{}\tau}. 
\end{equation}
The first integral on the right hand side, which is
\begin{align}
 \int_0^t{  u_{tt}(\tau, L) k_1(u_t(\tau,L)) \dd{}\tau} & =  \int_0^t{ \frac{\dd}{\dd \tau} \int_0^{ u_t(\tau,L)}{k_1(s) \dd s} \dd{}\tau} \nonumber\\
 & =  \int_0^{ u_t(t,L)}{k_1(s) \dd{}s} - \int_0^{ u_t(0,L)}{k_1(s) \dd{}s}, \label{est_1_v}
\end{align}
is uniformly bounded since $u_t(t,L)=\frac{\psi(t)}m$ is uniformly bounded, see Theorem~\ref{global_existence}.
For the remaining term in \eqref{int_lyap} we obtain
\begin{align}
 \int_0^t&u_{tt}(\tau, L)k_1'(u(\tau, L))  u_t(\tau, L) \dd\tau  =  \int_0^t \frac{\dd}{\dd \tau} \left( \frac{ (u_t(\tau,L))^2}{2}\right) k_1'(u(\tau, L)) \dd\tau \nonumber\\
  &=  \frac{ u_t(t,L)^2}{2} k_1'(u(t, L)) - \frac{ u_t(0,L)^2}{2} k_1'(u(0, L)) -\int_0^t \frac{u_t(\tau,L)^3}{2}  k_1''(u(\tau, L)) \dd\tau.\label{estmate_1}
\end{align}
Due to the Sobolev embedding $H^2(0,L)\inj C([0,L])$ we have the estimate ${|u(t,L)|\le C \|u\|_{H^2}\le C\|y\|_{\mh}}$. Therefore $k_1''(u(t, L))$ is also (essentially) uniformly bounded for $t \in [0, \infty)$, cf.~Assumptions~\ref{ass}. Together with the previously shown uniform boundedness of $u_t(t,L)$ we find that the first two terms in \eqref{estmate_1} are uniformly bounded, and for the remaining integral we get
\[
 \Big| \int_0^t{ \frac{u_t(\tau,L)^3}{2}  k_1''(u(\tau, L)) \dd{}\tau} \Big| \le  C \int_0^t{ |u_t(\tau,L)|^3 \dd{}\tau}.
\]
Due to \eqref{k_quadr}, and considering that $u_t(t,L)$ is uniformly bounded for $t\in\R$, there exists a positive constant $C>0$ such that $|k_2(u_t(t,L))| \ge C u_t(t,L)^2$ for all $t\ge 0$. This yields
\[\int_{0}^{\infty}{ |u_t(t,L)|^3 \dd{}t} \le C\int_{0}^{\infty}{ k_2(u_t(t,L))  u_t(t,L) \dd t},\] and since $\frac{\dd}{\dd t}(V(y(t))) = - k_2(u_t(t,L)) u_t(t,L)$ is integrable on $(0,\infty)$, we obtain $u_t(.\, , L) \in L^{3}(\mathbb{R}^{+})$.

Therefore, all terms in \eqref{estmate_1} are uniformly bounded. Together with the uniform boundedness of \eqref{est_1_v}  this shows in \eqref{int_lyap} that $V(y_t(t)) \in L^{\infty}(\mathbb{R}^{+})$, and therefore $t\mapsto \|y_t(t)\|_{\mh}$ is uniformly bounded, see Lemma \ref{v_bdd}. Hence, $\gamma(y_0)$ is precompact. Moreover, notice that actually
\begin{equation} \label{unif_bound}
\sup_{t\ge0}{\|y_t(t)\|_{\mh}} \le \tilde C(\|y_0\|_{\mh}, \|y_t(0)\|_{\mh}),
\end{equation}
where the constant $\tilde C$ depends continuously on $\|y_0\|_{\mh}$ and $\|y_t(0)\|_{\mh}$.

\underline{\emph{Step 2:}} 
For the second part of the proof, we take $y_0 \in D(\ma)$. According to Lemma \ref{density} there exists a sequence $\{y_{n,0}\}_{n \in \N} \subset D(\ma^2)$ such that 
\begin{equation} \label{stronger_conv}
\lim_{n \rightarrow \infty}{y_{n,0}}=y_0 \quad \text{and}\quad \lim_{n \rightarrow \infty}\A y_{n,0}=\A y_0 .
\end{equation} 
For the  approximating solutions $y_n(t):=S(t)y_{n,0}$ we have  $(y_n)_t(0)=\A y_{n,0}$ for all $n\in\N$, and \eqref{stronger_conv} thus implies
\begin{equation} \label{derivative_conv}
\lim_{n \rightarrow \infty}{(y_{n})_t(0)} = \ma y_0 \,\, \text{ in } \,\, \mh.
\end{equation}
Hence \eqref{stronger_conv} and \eqref{derivative_conv} imply that both $\{y_{n,0}\}_{n \in \N}$ and $\{(y_{n})_t(0)\}_{n \in \N}$ are bounded in $\mh$. Together with \eqref{unif_bound} this yields that 
\[
\sup_{\substack{t\ge0 \\n\in\N}}{\|(y_n)_t(t)\|_{\mh}} <\infty,
\]
i.e.~$\{(y_n)_t\}_{n\in\N}$ is bounded in $L^{\infty}(\R^+ ; \mh)$. Now the Banach-Alaoglu Theorem, see Theorem~I.3.15 in \cite{Rudin},
shows that there exists a $w \in L^{\infty}(\R^+ ; \mh)$ and a subsequence $\{y_{n_k}\}_{k \in \N}$ such that \[ (y_{n_k})_t \stackrel{\ast}{\rightharpoonup} w \,\, \text{ in } \,\,L^{\infty}(\R^+ ; \mh).\] 
So for  $z \in \mh$ and $t \ge 0$ arbitrary we have 
\[\lim_{k \rightarrow \infty}{\int_0^t \la (y_{n_k})_t(\tau) , z \ra_{\mh} \dd{}\tau} = \int_0^t \la w(\tau), z \ra_{\mh} \dd{}\tau,\] which is equivalent to
\[\lim_{k \rightarrow \infty}{ \la y_{n_k}(t) - y_{n_k}(0) , z \ra_{\mh}} =  \Big\la \int_0^t w(\tau) \dd{}\tau, z \Big\ra_{\mh}. \] 
Since $y_n(t) $ converges to $y(t)$ strongly in $L^\infty((0,T);\mh)$ for every $T>0$, we conclude from the above that
\[\la y(t) - y(0) , z \ra_{\mh}  = \Big\la \int_0^t w(\tau) \dd{} \tau, z \Big\ra_{\mh}.\]
Now, owing to $z\in\HH$ being arbitrary, it follows that 
\begin{equation} \label{der}
y(t) - y(0) = \int_0^t w(\tau) \dd{} \tau. 
\end{equation} 
Since $y\in C^1(\R^+;\mh)$, we can take the time derivative of \eqref{der}, and obtain $y_t \equiv w$. This implies $y_t \in L^{\infty}(\R^+ ; \mh)$, i.e.~$\|y_t(.)\|_{\mh}$ is uniformly bounded, which proves the precompactness of $\gamma(y_0)$.
\end{proof}

\begin{remark}\label{comp_lin}
In the linear case, i.e.~$\nn=0$, the proof of the trajectory precompactness is much simpler: For classical solutions $y(t)$ we have $Ay(t)=A\e^{tA}y_0 = \e^{tA}Ay_0$, so $Ay(t)$ is uniformly bounded. Since $A^{-1}$ is compact, this proves the precompactness for classical solutions. Since $(\e^{tA})_{t\ge0}$ is a contraction semigroup, any mild solution can be approximated uniformly by classical solutions, and the precompactness follows also for mild solutions.

In the case when $k_1$ is linear (i.e.~$k_1(u(L))=K_1\cdot u(L)$), and $k_1, k_2$ satisfy the Assumptions~\ref{ass}, the precompactness property of the trajectories can also be verified easily: If we incorporate the $k_1$-term and the linear part of $k_2$ into $A$, this operator still generates a (nonlinear) contraction semigroup (see also Remark~\ref{rem22}) with respect to the modified inner product
\[
 \langle y_1,  y_2  \rangle_{\HH,2}  :=  \frac\Lambda2 \int_{0}^{L}\!\! 
  u_1'' {u_2}'' \dd{}x + \frac{ \rho}{2} \int_{0}^{L}\!\! v_1 {v_2} \dd{}x + \frac{\xi_1 {\xi_2}}{2 J}  + \frac{\psi_1
{\psi_2 }}{2 m} +\frac{K_1\Lambda}2 u_1(L)u_2(L). \]
Furthermore, $A$ is invertible and has a compact resolvent in $\HH$. For the remaining nonlinear term we can show $\nn(y(.)) \in L^1(\R^+ ; \mh)$, using \eqref{lyap_dec_gnral} and \eqref{V_limit}. Then the prerequisites of Theorem 4 in the article by Dafermos and Slemrod \cite{daf_sle} are fulfilled, and the precompactness of the trajectories for all mild solutions follows.
\end{remark}

\section{$\omega$-limit set and asymptotic stability} \label{sec:5}

In this section we first investigate some properties of $\omega$-limit sets, which will turn out to be essential in the study of the asymptotic stability of the system.

\begin{definition}%[$\omega$-limit set]
Given the semigroup $S$, the {\em $\omega$-limit set} for $y_0 \in \mh$ is denoted by $\omega(y_0)$, and defined by:
\[ \omega(y_0):= \{y \in \mh \colon \exists \{t_n\}_{n \in \mathbb{N}} \subset \mathbb{R}^{+}, \,\, \lim_{n \to \infty}{t_n} = \infty \, \land \, \lim_{n \rightarrow \infty}{S(t_n)y_0} = y \} \] 
\end{definition}

It is possible that $\omega(y_0) = \emptyset$. According to Proposition 9.1.7 in \cite{Cazenave:Haraux} we have:

\begin{lemma}
For $y_0 \in \mh$, the set $\omega(y_0)$ is $S$-invariant, i.e.~$S(t)\omega(y_0) \subset \omega(y_0)$ for all $t \ge 0$.
\end{lemma}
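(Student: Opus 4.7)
The plan is to directly unpack the definition of $\omega(y_0)$ and exploit two structural properties of $S$ already available from the earlier discussion: the semigroup identity $S(t+s)=S(t)S(s)$ and the continuity of each individual operator $S(t):\mh\to\mh$ (both packaged in the statement, just before the definition of $\omega(y_0)$, that $S=\{S(t)\}_{t\ge 0}$ is a strongly continuous semigroup of nonlinear continuous operators on $\mh$, via Theorem 9.3.2 in \cite{Cazenave:Haraux}).

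First I would fix $t\ge 0$ and an arbitrary $y\in\omega(y_0)$; if $\omega(y_0)$ is empty the inclusion is vacuous, so there is nothing to check in that case. By definition of the $\omega$-limit set there exists a sequence $\{t_n\}_{n\in\N}\subset\R^+$ with $t_n\to\infty$ and $S(t_n)y_0\to y$ in $\mh$. The goal is to exhibit a corresponding sequence realizing $S(t)y$ as a limit of the orbit through $y_0$.

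Next I would set $s_n:=t+t_n$. Clearly $s_n\to\infty$. Using the semigroup property, $S(s_n)y_0=S(t+t_n)y_0=S(t)S(t_n)y_0$. Since $S(t)$ is a continuous map on $\mh$ and $S(t_n)y_0\to y$, passing to the limit gives $S(s_n)y_0=S(t)S(t_n)y_0\to S(t)y$ in $\mh$. Thus the sequence $\{s_n\}$ witnesses $S(t)y\in\omega(y_0)$, proving $S(t)\omega(y_0)\subset\omega(y_0)$.

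There is no substantive obstacle here: the argument is essentially the standard one for dynamical systems and is valid for arbitrary strongly continuous semigroups of continuous (possibly nonlinear) operators. The only subtle point is that continuity of the individual map $S(t)$ (for each fixed $t$) is exactly what is needed, and this has been recorded explicitly in the paragraph following Corollary \ref{cor:4.9}; no additional smoothness of $S$ in $t$ nor any property of $\ma$ is required.
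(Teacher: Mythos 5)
Your argument is correct and is exactly the standard one; the paper itself gives no proof but simply cites Proposition 9.1.7 of Cazenave--Haraux, which establishes the same fact by the same reasoning (semigroup property plus continuity of each $S(t)$). Nothing is missing.
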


According to \eqref{cor:4.9} the function $t \mapsto V(S(t)y_0)$ is monotonically (but not necessarily strictly) decreasing for any fixed $y_0 \in \mh$. Furthermore it is bounded from below by $0$. Therefore, the following
limit exists: 
\begin{equation} \label{V_limit}
 \nu(y_0) := \lim_{t \rightarrow \infty }{V(S(t)y_0)} \ge 0.
\end{equation}

\begin{lemma}
Suppose $\omega(y_0) \ne \emptyset$. Then there holds
\[V(\omega(y_0)) = \{\nu(y_0)\},\]
i.e.~$V$ takes the same value $\nu(y_0)$ on every element of $\omega(y_0)$. In particular $\dot{V}(y) = 0$ for all $y \in \omega(y_0)$. 
\end{lemma}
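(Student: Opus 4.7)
The proof should rely on three ingredients already established in the excerpt: the continuity of $V$ (Lemma \ref{V:cont}), the monotone convergence of $V(S(t)y_0)$ to $\nu(y_0)$ as $t\to\infty$ (equation \eqref{V_limit}), and the $S$-invariance of $\omega(y_0)$.

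First I would take an arbitrary $y \in \omega(y_0)$. By definition there exists a sequence $t_n \to \infty$ with $S(t_n)y_0 \to y$ in $\mh$. Continuity of $V$ gives $V(S(t_n)y_0) \to V(y)$, while \eqref{V_limit} gives $V(S(t_n)y_0) \to \nu(y_0)$. Uniqueness of limits forces $V(y) = \nu(y_0)$, proving $V(\omega(y_0)) = \{\nu(y_0)\}$.

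For the second statement, I would use the $S$-invariance of $\omega(y_0)$. For any $y \in \omega(y_0)$ and any $t \ge 0$, $S(t)y \in \omega(y_0)$, so by the first part $V(S(t)y) = \nu(y_0) = V(y)$. Consequently
\begin{equation*}
\dot V(y) = \limsup_{t \searrow 0} \frac{V(S(t)y) - V(y)}{t} = \limsup_{t \searrow 0} \frac{0}{t} = 0.
\end{equation*}

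There is no serious obstacle here; the statement is essentially a direct consequence of the structural properties of $\omega$-limit sets under a continuous Lyapunov function, and every needed ingredient has already been proved. The only mild subtlety is recognizing that $S$-invariance combined with the constancy of $V$ on $\omega(y_0)$ makes $V(S(t)y)$ literally constant (not just non-increasing) along trajectories starting inside the $\omega$-limit set, which is what forces $\dot V(y)$ to vanish identically rather than merely being non-positive.
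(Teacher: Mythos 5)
Your proof is correct and follows essentially the same route as the paper: the first part is the identical continuity-plus-\eqref{V_limit} argument, and your explicit use of $S$-invariance to get $\dot V(y)=0$ just spells out the step the paper compresses into ``the result follows.''
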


\begin{proof}
Let $y \in \omega(y_0)$. Then, there exists a sequence $\{t_n\}_{n \in \mathbb{N}} \subset \mathbb{R}^{+}$, with $t_n\to\infty$, such that $\lim_{n \rightarrow \infty}{S(t_n)y_0}=y$. Since 
$V$ is continuous, \[\lim_{n \rightarrow \infty}{V(S(t_n)y_0)}=V(y).\] Due to \eqref{V_limit} we have $V(y) = \nu(y_0)$, and the result follows.
\end{proof}

Therefore we may identify possible $\omega$-limit sets by investigating trajectories along which the Lyapunov function $V$ (see \eqref{lyapunov}) is constant. To this end we define $\Omega \subset \mh$ by:
\begin{equation}\label{def_Omega}
\Omega \text{ is the largest $S$-invariant set with:}\quad\Omega\subseteq\{y\in\HH:\dot V(y)=0\},
\end{equation}
where $S$ is the nonlinear semigroup defined after Corollary~\ref{cor:4.9}.
Clearly, there holds
\[
\omega(y_0)\subset\Omega,\quad\forall y_0\in\HH.
\]
Thus our focus first lies on characterizing $\Omega$.

\begin{lemma} \label{aux_A}
For every $ y_0 \in \mh$ the following holds, for all $t>0$:
\begin{equation} \label{A1}
        \int_0^t{S(s) y_0 \dd s} \in D(\mathcal{A}),
\end{equation}
and
\begin{equation} \label{A2}
         S(t)y_0 - y_0=A\int_0^t S(s)y_0\dd s + \int_0^t \nn S(s)y_0\dd s. 
\end{equation}
\end{lemma}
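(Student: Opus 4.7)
The plan is to derive both assertions from Duhamel's formula \eqref{duhamel} by integrating in time, applying Fubini's theorem, and then invoking two standard facts about the $C_0$-semigroup $(\e^{tA})_{t\ge0}$: for any $x\in\HH$ we have $\int_0^t \e^{sA}x\dd s\in D(A)$ with $A\int_0^t\e^{sA}x\dd s = \e^{tA}x - x$, and $A$ is closed (so $A$ commutes with Bochner integrals of $D(A)$-valued functions whenever the image is integrable).

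First I would take the mild solution $y(\cdot)=S(\cdot)y_0$ and integrate Duhamel's formula from $0$ to $t$:
\[
\int_0^t y(s)\dd s = \int_0^t \e^{sA}y_0\dd s + \int_0^t\!\!\int_0^s \e^{(s-r)A}\nn y(r)\dd r\dd s.
\]
Continuity of $y$ and of the semigroup (together with the boundedness of $\nn$ on bounded sets from Theorem \ref{global_existence}) guarantees that all the integrands are continuous in $\HH$, so Fubini applies and the double integral rearranges to
\[
\int_0^t\!\!\int_r^t \e^{(s-r)A}\nn y(r)\dd s\dd r = \int_0^t\!\!\int_0^{t-r}\e^{\sigma A}\nn y(r)\dd\sigma\dd r
=: \int_0^t f(r)\dd r,
\]
after the substitution $\sigma=s-r$.

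The first claim \eqref{A1} then follows at once: the term $\int_0^t\e^{sA}y_0\dd s$ lies in $D(A)$ by the first standard semigroup fact, and for each fixed $r\in[0,t]$ the inner integral $f(r)=\int_0^{t-r}\e^{\sigma A}\nn y(r)\dd\sigma$ also lies in $D(A)$ with $Af(r)=\e^{(t-r)A}\nn y(r)-\nn y(r)$, which is continuous (hence Bochner-integrable) in $r$. Closedness of $A$ then lets me pull $A$ inside the outer integral, giving $\int_0^t f(r)\dd r\in D(A)$ with
\[
A\int_0^t f(r)\dd r = \int_0^t\e^{(t-r)A}\nn y(r)\dd r - \int_0^t\nn y(r)\dd r.
\]
Hence $\int_0^t y(s)\dd s \in D(A)=D(\mathcal{A})$, settling \eqref{A1}.

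For \eqref{A2} I apply $A$ to the sum and combine the two pieces:
\[
A\int_0^t y(s)\dd s = \Bigl(\e^{tA}y_0 - y_0\Bigr) + \int_0^t \e^{(t-r)A}\nn y(r)\dd r - \int_0^t \nn y(r)\dd r.
\]
The first three terms collapse via Duhamel \eqref{duhamel} to $y(t)-y_0=S(t)y_0-y_0$, leaving exactly $S(t)y_0 - y_0 - \int_0^t \nn y(r)\dd r$, which rearranges to \eqref{A2}.

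The only subtlety worth flagging is the justification of commuting $A$ with the outer integral; this is a direct consequence of closedness plus continuity of $r\mapsto Af(r)$, and the latter in turn requires continuity of $r\mapsto \nn y(r)$, which holds because $y$ is continuous into $\HH$ and $\nn$ is continuous (indeed locally Lipschitz) on $\HH$.
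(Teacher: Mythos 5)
Your proof is correct, but it takes a genuinely different route from the paper's. The paper argues in two steps: first, for $y_0\in D(\mathcal{A})$ the solution is classical, so the equation can be integrated pointwise and $A$ is pulled out of $\int_0^t AS(s)y_0\dd s$ via Riemann sums and closedness; second, a general $y_0$ is handled by approximating with classical solutions, using the uniform convergence $S(\cdot)y_{n,0}\to S(\cdot)y_0$ on compact time intervals, the local Lipschitz continuity of $\nn$, and closedness of $A$ once more to pass to the limit in \eqref{A2}. You instead work directly with the Duhamel formula for an arbitrary mild solution: integrating it in time, applying Fubini to the convolution term, and invoking the standard identity $A\int_0^\tau \e^{\sigma A}x\dd\sigma=\e^{\tau A}x-x$ together with closedness of $A$ (Hille's theorem) to commute $A$ with the outer Bochner integral. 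This avoids the case distinction and the density/approximation machinery entirely; all you need is that $A$ generates a $C_0$-semigroup and that $s\mapsto\nn S(s)y_0$ is continuous, which follows from continuity of $\nn$ and of the mild solution. In that sense your argument directly establishes the more general statement the paper only alludes to after the lemma (validity for any generator $A$ and, in fact, merely continuous $\nn$ along the trajectory), whereas the paper's proof leans on the specific regularity theory (classical solutions for $y_0\in D(\mathcal{A})$ and Proposition \ref{approx_solutions}) already developed. The one step you rightly flag --- commuting $A$ with the outer integral --- is justified exactly as you say, since $r\mapsto Af(r)=\e^{(t-r)A}\nn y(r)-\nn y(r)$ is continuous on $[0,t]$, hence Bochner integrable.
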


\noindent{}The proof of Lemma~\ref{aux_A} is deferred to the Appendix \ref{app:B}: While $\psi=0$ is a simple consequence of the shape of $\frac{\mathrm{d}}{\mathrm{d}t} V(y(t))$ in \eqref{lyap_dec_gnral} (at least for classical solutions), $u(L)=0$ is obtained from the uniform boundedness of $\|y(t)\|_\HH$.

 This result can be understood as a generalization of Theorem 1.2.4 in \cite{Pazy} to nonlinear semigroups. It even holds in the more general situation where $A$ is linear and the infinitesimal generator of a $C_0$-semigroup, and $\nn$ is differentiable, see \cite{thesis_stuerzer}. For more details on nonlinear contraction semigroups we refer to \cite{barbu}.

\begin{proposition}\label{null_set}
Let $\Omega$ be as defined in \eqref{def_Omega}. Then, for all $y= [u , v, \xi , \psi]^{\top} \in \Omega$ there holds $\psi = 0,\,u(L) = 0$.
\end{proposition}

The proof of Proposition~\ref{null_set} is deferred to the Appendix~\ref{app:B}. This result allows to represent any trajectory $\gamma(y_0) \subset \Omega$ as a solution to a simpler linear system characterizing $\Omega$. By inserting the result of Proposition \ref{null_set} in the equation \eqref{A2} we find that any mild solution $y(t)$ of \eqref{ivp} with $y(t)\in\Omega$ for all $t\ge 0$, satisfies the following system:
\begin{subequations} \label{projected_system}
\begin{align}
u(t) - u(0) & =  \int_0^t{v(s) \dd{} s},\label{ps_1} \\
v(t) - v(0) & =  - \frac{\Lambda}{\rho} \left(\int_0^t{u(s) \dd{} s}\right)^{\rom{4}},\label{ps_2} \\
\xi(t) - \xi(0) & =  - \Lambda \left(\int_0^t{u(s) \dd{} s}\right)''\bigg|_{x=L},\label{ps_3}\\
0 & =  \left(\int_0^t{u(s) \dd{} s}\right)'''\bigg|_{x=L},\label{ps_4}
\end{align} 
\end{subequations}
together with the additional boundary condition $u(t,L)=0$. We will show that this system is overdetermined. To this end we first investigate the system \eqref{projected_system} without the condition $u(t,L)=0$, and only incorporate it later.

The system \eqref{ps_1}-\eqref{ps_3} can be interpreted as a mild formulation of a linear evolution equation in a Hilbert space $\tilde \HH$:
\begin{equation} \label{projected_eq}
w_t = \mathcal{B} w, 
\end{equation}
with $w = [u , v , \xi]^\top\in\tilde\HH$. Here, $\tilde\HH$ is the Hilbert space
\[\mth  := \{ w = [u, v, \xi]^\top \colon u \in \tilde H^{2}_{0,\R}(0, L), v \in L^{2}_\R(0,L),  \xi \in \R \},
\] 
and $\mathcal{B}$ is the following linear operator in $\tilde \HH$:
\begin{equation}\label{def_B}
\mathcal{B} \left[ \begin{array}{c}
               u \\ v \\ \xi
              \end{array} \right] = 
              \left[ \begin{array}{c}
              v \\ - \frac{\Lambda}{\rho} u^{\rom 4} \\ - \Lambda u''(L)  
              \end{array} \right],
\end{equation}
with the domain
\[D(\mathcal{B}) :=  \{w \in \tilde\mh \colon 
 u \in \tilde H^{4}_{0,\R}(0, L), v \in \tilde H^{2}_{0,\R} (0,L), \xi = J v'(L),  u'''(L) = 0\},\]
 which incorporates the condition \eqref{ps_4}. The space $\tilde\HH$ is equipped with the inner product 
 \begin{align*}
 \laa w_1, w_2 \raa & :=  \frac{\Lambda}{2} \int_{0}^{L} 
 u_1''u_2'' \dd{}x + \frac{\rho}{2} \int_{0}^{L}{ 
 {v_1} {v_2} \dd{}x} + \frac{1}{2 J} {\xi_1} {\xi_2}.
\end{align*}

Due to Proposition \ref{operator_B} the operator $\mb$ is skew-adjoint (in $\tilde \XX$, i.e.~the complexification  of $\tilde \HH$, see the Appendix \ref{app:a}) and generates a $C_0$-group of unitary operators. The eigenvalues $\{\mu_n\}_{n\in\Z\setminus\{0\}}$ are purely imaginary, and come in complex conjugated pairs, i.e.~$\mu_{-n}=\overline{\mu_{n}}$. Zero is not an eigenvalue, since $\mb$ is invertible, see \cite{Kugi:Thull}. The corresponding eigenfunctions $\{\Phi_n\}_{n\in\Z\setminus\{0\}}$ form an orthonormal basis of $\tilde {\mathcal X}$. They are given by
\begin{equation}\label{PHI}
  \Phi_n= \begin{bmatrix}
u_n \\  \mu_n u_n \\  \mu_n J u_n'(L)
\end{bmatrix},
\end{equation}
where $u_n$ is the unique real-valued solution of
\begin{subequations} \label{eigen_eqn}
\begin{align}
\rho \mu_n^2 u_{n} + \Lambda u_{n}^{\rom{4}} & =  0, \label{ef1}\\
u_{n}'''(L) & =  0 ,\label{ef2}\\
 J \mu_n^2 u_{n}'(L) + \Lambda u_n''(L) & =  0,\label{ef3}
 \end{align}
\end{subequations}
where $u_n$ is normalized such that $\|\Phi_n\|_{\tilde \XX}=1$. Note that $\mu_n^2<0$. From \eqref{PHI} it is clear that $\Phi_{-n}=\overline{\Phi_n}$, and hence $u_{-n}=u_n$. For the complete spectral analysis of $\mb$ see Proposition \ref{operator_B} in the appendix. For notational simplicity we include the index $n=0$ in the following by setting $\mu_0:=0$ and $\Phi_0:=0$ and $u_0:=0$.

\medskip

Summarizing we note that a trajectory $\gamma( y_0)\subset\Omega$ satisfies the (reduced) linear system \eqref{projected_eq} and the boundary condition $u(t,L)=0$, for all $t\ge 0$. Now it will turn out that, for almost all values of $J>0$, the equation \eqref{projected_eq} plus the boundary condition $u(t,L)=0$ only has the trivial solution. For a countable set of $J>0$, however, it admits non-trivial solutions. They correspond to eigenfunctions of \eqref{eigen_eqn} having a node at $x=L$.

\begin{lemma} \label{ef_boundary}
There exists a non-trivial solution $u_n$ (for any $n\in\Z\setminus\{0\}$) of the system \eqref{eigen_eqn} that additionally satisfies $u_n(L)=0$ iff
\begin{equation}\label{1217}
 J = \rho \left( \frac{L}{\ell \pi} \right)^3 \frac{(-1)^\ell + \cosh{\ell \pi} }{\sinh{\ell \pi}},\quad\text{for some }\ell\in\N.
\end{equation}

In this case, $u_n(=u_{-n})$ is unique up to normalization and $\mu_n^2 = - \frac{\Lambda}{\rho} \left( \frac{\ell \pi }{L}\right)^4$. We shall denote the (unique) index of this particular eigenfunction by $n=n^*(\ell)>0$.
\end{lemma}

\noindent{}The proof of Lemma~\ref{ef_boundary} is deferred to the Appendix \ref{app:B}. We recall from \eqref{def_J} the definition of $\mathscr J$, which is the set of all values $J_\ell$ on the right hand side of \eqref{1217}. Concerning the $\omega$-limit set we distinguish between two situations:

\begin{theorem}\label{w-limit}
Let $\Omega$ be the set defined in \eqref{def_Omega}, and $\mathscr J$ the set from \eqref{def_J}.
\begin{enumerate}
\renewcommand{\theenumi}{\roman{enumi}}
\renewcommand{\labelenumi}{(\theenumi)}
\item\label{i} If $J\notin\mathscr J$, then $\Omega = \{ 0 \}$. 
\item\label{ii} If $J\in\mathscr J$, then 
\[\Omega=\spn_\R\{[u_{n^*},0, 0,0]^\top, [0,u_{n^*}, J u_{n^*}'(L),0]^\top\}.\] 
Here, $u_{n^*}$ is the non-trivial solution to \eqref{eigen_eqn}, for the given $J$.
\end{enumerate}

\end{theorem}

\begin{proof}
This proof closely follows the argumentation in \cite{Conrad:Pierre}. According to Proposition \ref{operator_B} in the Appendix \ref{app:a} we can write the mild solution of the linear evolution equation \eqref{projected_eq} with the initial condition $w_0\in\tilde\HH$ as
\begin{equation} \label{series_1}
w(t) = e^{t \mb} w_0 = \sum_{n \in \mathbb{Z}}\laa w_0,{\Phi_n} \raa_{\tilde{\mathcal X}} e^{\mu_n t} \Phi_n,
\end{equation}
where $\{\mu_n\}_{n\in\Z}$ are the (imaginary) eigenvalues of $\mb$, and $\Phi_n$ are the corresponding normalized eigenfunctions\footnote{Note that if $w_0\in\mth$, i.e.~$w_0$ is real valued, then the series always maps into $\mth$ again.}, see Proposition \ref{operator_B}. Here, $\laa .,.\raa_{\tilde{\mathcal X}}$ is the inner product in $\tilde\XX$, see the Appendix \ref{app:a}. We define $c_n:= \laa w_0,{\Phi_n} \raa_{\tilde{\mathcal X}}$ for all $n\in\Z$. Due to the ortho\-normality of the eigenfunctions $\{\Phi_n\}_{n\in\Z\setminus\{0\}}$ and the fact that $\{\mu_n\}_{n\in\Z}\subset\ii\R$ we have for any $N\in\N$:
\begin{equation}\label{unif_Estii}
\Big\|\sum_{|n|\ge N}c_n e^{\mu_n t} \Phi_n\Big\|^2_{\tilde{\mathcal X}} = \sum_{|n|\ge N}|c_n|^2.
\end{equation}
Due to Parseval's identity we also have $\sum_{n\in\Z}|\laa w_0,{\Phi_n} \raa_{\tilde{\mathcal X}}|^2 = \|w_0\|_{\tilde{\mathcal X}}^2$. As a consequence the right hand side in \eqref{unif_Estii} tends to zero as $N\to \infty$. So, for every $\varepsilon>0$ there exists some $N>0$ such that 
\begin{equation}\label{unifomr}
\sup_{t\ge 0} \Big\|\sum_{|n|\ge N}c_n e^{\mu_n t} \Phi_n\Big\|_{\tilde{\mathcal X}} <\varepsilon.
\end{equation}
The first component of the series \eqref{series_1} converges in $H^2(0,L)$ and therefore also in $C([0,L])$. Thus we have
\begin{equation}\label{u_boundary}
u(t,L)=\sum_{n\in\Z}c_n\e^{\mu_n t} u_n(L),\quad \forall t\ge 0.
\end{equation} 
Using this representation formula we now investigate those $u(t)$ that satisfy ${u(t,L)=0}$ for all times. We immediately find for every $N\in\N$:
\begin{align*}
\Big|\sum_{|n|\ge N} c_n\e^{\mu_n t} u_n(L)\Big|
&\le C \Big\|\sum_{|n|\ge N} c_n\e^{\mu_n t} u_n\Big\|_{H^2(0,L)} \\
&\le C \Big\|\sum_{|n|\ge N} c_n\e^{\mu_n t}\Phi_n\Big\|_{\tilde{\mathcal X}} .
\end{align*}
According to \eqref{unifomr} this implies that, for every $\varepsilon>0$, we can find an $N\in\N$ (large enough) such that 
\begin{equation}\label{supup}
\sup_{t\ge0}\Big|\sum_{n=-N}^N c_n\e^{\mu_n t}u_n(L)\Big|<\varepsilon,
\end{equation}
provided that $u(t,L)=0$ for all $t\ge 0$.

We fix now some $k\in\Z$ and $\varepsilon>0$, and select $N\in\N$ so large that ${|k|<N}$ and \eqref{supup} is satisfied.
Then we multiply the finite sum by $\e^{-\mu_k t}$ and integrate over $[0,T]$:
\begin{align*}
\frac 1T\int_0^T \sum_{n=-N}^N c_n\e^{\mu_n t}u_n(L)\e^{-\mu_k t}\dd t 
&= \sum_{n=-N}^N c_n u_n(L)\frac 1T\int_0^T\e^{(\mu_n-\mu_k)t}\dd t.
\end{align*}
Due to \eqref{supup} this expression still has modulus less than $\varepsilon$. Now we let $T\to\infty$. Since all eigenvalues $\mu_n$ of $\mathcal{B}$ are distinct (see Proposition \ref{operator_B}), all terms in the integral vanish except for the term where $n=k$, and we obtain
\[ |c_k\, u_k(L)| <\varepsilon.\]
Since $\varepsilon$ was arbitrary, we conclude
\begin{equation}\label{cond_ck}
 c_k\, u_k(L) = 0,\quad \forall k\in\Z.
\end{equation}
Now we need to distinguish between two situations: Either $J\notin\mathscr J$ or $J\in\mathscr J$.

(\ref{i}) In the first case,  due to Lemma \ref{ef_boundary},  $u_n(L) \ne 0$ for all $n\in\Z$. Then \eqref{cond_ck} implies that $ c_k=0$ for all $k\in\Z$, and consequently $w_0=w(t)\equiv 0$ for all $t>0$. Therefore $\Omega=\{0\}$.

\eqref{ii} Now we consider $J=J_\ell\in\mathscr J$. According to Lemma \ref{ef_boundary}, we have $u_k(L)= 0$ iff $k\neq \pm n^*(\ell)$. So we  get from \eqref{cond_ck} that
\begin{subequations}\label{ck}
\begin{align}
c_k&=0,\quad\forall k\in\Z\setminus\{\pm n^*(\ell)\},\label{ck_1}\\
c_{n^*}&\in\C\quad\text{arbitrary},\label{cl_2}
\end{align}
\end{subequations}
and $c_{-n^*}=\overline{c_{n^*}}$. This, combined with $\psi=0$ in $\Omega$, proves that ${\Omega=\Real\spn_\C\{[\Phi_{-n*}^\top,0]^\top,[\Phi_{n*}^\top,0]^\top\}=\spn_\R\{[u_{n^*}, 0 ,0,0]^\top, [0, u_{n^*}, Ju'_{n^*}(L),0]^\top\}}$.
\end{proof}

\begin{remark}
An alternative approach is to consider the system \eqref{ps_1}-\eqref{ps_3} together with $u(t,L)=0$, momentarily ignoring \eqref{ps_4}. The system \eqref{projected_eq} is then defined in the space
\[\tilde \HH_1:=\{w\in\tilde \HH: u(L)=0\}\]
instead of $\tilde \HH$, and $\mb$ has a different domain:
\[D_1(\mb):=\{w\in\tilde\HH_1: u\in\tilde H_{0,\R}^4(0,L), v\in\tilde H_{0,\R}^2(0,L), \xi=Jv'(L), v(L)=0\}.\]
Analogously to the Proposition \ref{operator_B} one finds that the operator $(\mb, D_1(\mb))$ is again skew-adjoint, generates a $C_0$-semigroup of unitary operators, and its eigenfunctions form an orthogonal basis of $\tilde \HH_1$. For the first component $\tilde u_n$ of the eigenfunctions we again get the representation of the form \eqref{gen_solu}. However, here we use $\tilde u_n(L)=0$ in order to determine the constants (i.e.~the $\tilde u_n$ are in general different to the ones used in the proof above). With these $\tilde u_n$ we have again a representation of the solution like in \eqref{u_boundary}, and only there we apply the remaining condition \eqref{ps_4}.

In the case $J\in\mathscr J$ we have seen (in Theorem \ref{w-limit}) that $\Omega=\Real \spn\{[\Phi_{\pm n^*},0]^\top\}$. From the definition of the $\Phi_{\pm n^*}$ we find that they are precisely the (two) common eigenfunctions of $(\mb, D(\mb))$ and $(\mb, D_1(\mb))$. We conclude that, in order to determine the $\omega$-limit set, the two approaches using either $(\mb, D(\mb))$ or $(\mb, D_1(\mb))$ are equivalent. They only differ in the order in which the boundary conditions $u'''(L)=0$ and $u(L)=0$ are applied.
\end{remark}

\noindent{}Now we have all the prerequisites to prove our main result:

\begin{theore}
Assume that the Assumptions~\ref{ass} hold for the nonlinearities $k_1,k_2$. Let $y_0\in D(\A)$, let $y(t)$ denote the corresponding classical solution of \eqref{ivp}, and let $\mathscr J$ be the set from \eqref{def_J}. Then there holds:
\begin{enumerate}
\renewcommand{\theenumi}{\roman{enumi}}
\renewcommand{\labelenumi}{(\theenumi)}
\item\label{a1i} If $J\notin\mathscr J$, then the system \eqref{ivp} is asymptotically stable with respect to $\|\cdot\|_{\HH}$, i.e.~
\[\lim_{t \rightarrow \infty}{y(t)} = 0.\]
\item\label{a1ii} If $J\in\mathscr J$, then  $y(t)$ approaches (with respect to $\|\cdot \|_\HH$) the time-periodic solution corresponding to the initial condition $\Pi^*y_0$ as $t\to\infty$. Here, $\Pi^*$ is the orthogonal projection from $\HH$ onto $\Omega$ (with $\Omega$ from \eqref{def_Omega}), and it is given by
\begin{align*}
\Pi^* y=\begin{bmatrix}
\Lambda \la u'' ,u_{n^*}''\ra_{L^2}u_{n^*}\\
|\mu_{n^*}|^2\big(\rho\la v,u_{n^*}\ra_{L^2}+\xi u_{n^*}'(L)\big)u_{n^*}\\
J|\mu_{n^*}|^2\big(\rho\la v,u_{n^*}\ra_{L^2}+\xi u_{n^*}'(L)\big)u_{n^*}'(L)\\
0
\end{bmatrix},\tag{\ref{pi_ex}}
\end{align*}
where $\la .,.\ra_{L^2}$ denotes the standard inner product on $L^2(0,L)$.
\end{enumerate}
\end{theore}

\begin{proof}
\underline{Case \eqref{a1i}:} According to Lemma \ref{reg_precompactness}, the trajectory $\gamma(y_0)$ is precompact. According to Theorem \ref{w-limit} we further have $\Omega=\{0\}$. So we can apply the LaSalle Invariance Principle, cf.~Theorem 3.64 in \cite{lgm}, which yields that $\lim_{t\to\infty}\|y(t)\|_{\HH}=0$.

\underline{Case \eqref{a1ii}:} With $J=J_\ell\in\mathscr J$, we consider $n^*(\ell)$ as in  Lemma \ref{ef_boundary}. According to the end of the proof of Theorem \ref{w-limit} the $\omega$-limit set is the (complex) span of the two vectors ${\Psi_{\pm n^*}=[\Phi_{\pm n^*}^\top, 0]^\top}$, where  $\Phi_{-n^*}=\overline{\Phi_{n^*} }$. Since $\Phi_{\pm n^*}\in D(\mb)$ we know that ${u_{n^*}'''(L)=0}$, and so the $\Psi_{\pm n^*}$ are eigenvectors of $A$ to the eigenvalues $\pm\mu_{n^*}$. We may now define the orthogonal projection (first in $\XX$, see the Appendix \ref{app:a}):
\[\Pi^*:=\la .,\Psi_{-n^*}\ra_\XX\Psi_{-n^*}+\la .,\Psi_{n^*}\ra_\XX\Psi_{n^*}.\]
According to Proposition \ref{spec_a} the eigenvectors of $A$ form an orthogonal basis of $\XX$, so $\Pi^*$ commutes with $A$, and $\XX=\Ker \Pi^*\oplus\ran\Pi^*$ is an orthogonal, $A$-invariant decomposition of $\XX$. In the following we work with the restriction of $\Pi^*$ to $\HH$, and keep the same notation. The explicit representation of $\Pi^*$ is given by \eqref{pi_ex}.

In the next step we show that $\Pi^*$ commutes with the nonlinearity $\nn$. Since the first component $u_{n^*}$ of $\Psi_{n^*}$ satisfies $u_{n^*}(L)=0$, it is clear that $\nn\Psi_{\pm n^*}=0$ and thus $\nn\Pi^*=0$. Now let $y\in\XX$. Then \[\nn y=\begin{bmatrix}0\\0\\0\\ -k_1(u(L))-k_2(\frac\psi m)\end{bmatrix},\]
and so $\Pi^*\mathcal N y=0$.

As a consequence, the decomposition $\HH=\Ker \Pi^*\oplus\ran\Pi^*$ is invariant under the nonlinear semigroup $S$ generated by $\ma$. The trajectories of $S|_{\Ker \Pi^*}$ lying in $D(\A)$ are still precompact. We know from Theorem \ref{w-limit} that any $\omega$-limit set of $S|_{\Ker \Pi^*}\subset S$ has to be a subset of $\ran\Pi^*$. But on the other hand any trajectory and limit of $S|_{\Ker \Pi^*}$ has to lie within $\Ker\Pi^*$, which is orthogonal to ${\ran \Pi^*}$. Thus the only possible $\omega$-limit set for $S|_{\Ker \Pi^*}$ is $\{0\}=\ran\Pi^*\cap\Ker\Pi^*$. And therefore $S(t)y_0$ approaches $S(t)\Pi^*y_0$ as $t\to\infty$.
\end{proof}

\begin{remark} \label{periodic_sol}
The asymptotic limit described in Theorem \ref{generator} can be computed explicitly. If
 $J = J_{\ell}$ for some $\ell \in \mathbb{N}$, it follows from \eqref{u_boundary}, \eqref{ck} and Lemma \ref{ef_boundary} that all real non-decaying solutions $u_p$ of \eqref{EBB_system} are time periodic. They are given by
\begin{equation} \label{non_decay}
u_p (t,x) = T(t) u_{n^*}(x),
\end{equation}
with the scalar function 
\[T(t) = a \cos\bigg[{ \sqrt{\frac{\Lambda}{\rho}}\left( \frac{\ell \pi}{L} \right)^2\!\! t}\bigg] + b \sin\bigg[{\sqrt{\frac{\Lambda}{\rho}} \left( \frac{\ell \pi}{L} \right)^2\!\! t}\bigg],\quad\text{for any } a, b \in \R,\]
and $u_{n^*}$ is given by \eqref{u_n_ast}.
In particular, it follows from Theorem \ref{generator} that for a given initial condition $y_0$ the solution $u$ of \eqref{EBB_system} approaches the solution $u_p$ given in \eqref{non_decay}, with the coefficients $a$ and $b$ determined by:
\[a := \Lambda \la u_0'' ,u_{n^*}''\ra_{L^2},\] and 
\[b := -\sqrt{ \frac{\Lambda}{\rho}} \left( \frac{\ell \pi}{L}\right)^2 \big(\rho\la v_0,u_{n^*}\ra_{L^2}+\xi_0 u_{n^*}'(L)\big).\]
\end{remark}

\begin{remark}
As already mentioned in Remark \ref{comp_lin} it is the nonlinear term $k_1(u(L))$, representing the spring in the model, that prevents the nonlinear operator $\ma$ from being dissipative. As a consequence the semigroup $S$ is not contractive and therefore it is not possible to extend the precompactness of the classical trajectories to the trajectories of the mild solutions using a density argument (for this see the proof of Theorem 3.65 in \cite{lgm}). From a physical point of view one might expect that (for $J\notin\mathscr J$) also the mild solutions tend to zero, which is motivated by the observation that the total energy is dissipated whenever a trajectory does not lie in $\Omega=\{0\}$, i.e.~for almost all times the system looses energy due to friction. However, from a mathematical point of view we have no information if the trajectory is precompact for a non-classical solution. Hence, it is not clear whether the trajectory converges at all as $t\to\infty$.
\end{remark}

%%%%%%%%%%%%%%%%%%%%%%%%%
%%%%%%%%%%%%%%%%%%%%%%%%%
%%%%%%%%%%%%%%%%%%%%%%%%%
%%%%%%%%%%%%%%%%%%%%%%%%%

\appendix

\renewcommand*{\thetheorem}{\Alph{section}.\arabic{theorem}}
\section{Functional analytical results}\label{app:a}

Even though the analysis of this paper is carried out for real-valued functions $u$ and as a consequence in the real Hilbert space $\HH$ (see \eqref{def_mh}), the spectral analysis of the occurring linear operators needs to be performed in a complex Hilbert space. This section contains some of those results. For the spectral analysis of the operator $A$, defined in \eqref{operator_A}, we introduce the complex Hilbert space
\[\XX:=\{y=[u,v,\xi,\psi]^\top: u\in \tilde H_0^2(0,L), v\in L^2(0,L), \xi,\psi\in\C\},\]
equipped with the inner product
\[
 \langle y_1,  y_2  \rangle_\XX  :=  \frac\Lambda2 \int_{0}^{L} 
  u_1'' \overline{u_2}'' \dd{}x + \frac{ \rho}{2} \int_{0}^{L} v_1 \overline{v_2} \dd{}x + \frac{1}{2 J} \xi_1 \overline{\xi_2} + \frac{1}{2 m} \psi_1
\overline{\psi_2 },\quad \forall y_1,y_2\in\XX. \]
For the operator $A$ given by \eqref{operator_A} we consider the natural continuation to $\XX$, still denoted by $A$. This continuation still is of the form \eqref{operator_A}, and the domain is now
\[D_\C(A) =  \{y \in \XX :
 u \in \tilde H^{4}_{0}(0, L), v \in \tilde H^{2}_{0} (0,L), \xi = J v'(L), \psi = m v(L) \},
\]
where the occurring Sobolev spaces now consist of complex valued functions.

\begin{proposition}\label{spec_a}
The linear operator $A$, given in \eqref{operator_A}, is skew-adjoint and has compact resolvent in $\XX$. The spectrum $\sigma(A)$ consists of countably many eigenvalues $\{\lambda_n\}_{n\in\Z}$. They are all isolated and purely imaginary, and each eigenspace has finite dimension. The eigenspaces form a complete orthogonal decomposition of $\XX$.
\end{proposition}

\begin{proof}
It can easily be shown that for all $y_1,y_2\in D_\C(A)$
\[
 \langle A y_1, y_2 \rangle_\XX  = \frac{\Lambda}{2}  \int_0^L v_1'' \overline{u_2}'' -  u_1'' \overline{v_2}'' \dd x  =  - \la y_1,Ay_2\ra_\XX,
\]
i.e.~$A$ is skew-symmetric. Straightforward calculations, analogous to those in \cite{Kugi:Thull}, demonstrate that $A$ is invertible and $A^{-1}:\XX\to\XX$ is even compact. So $0\in\rho(A)$, and due to the corollary of Theorem VII.3.1 in \cite{Yosida} this proves that $A$ is skew-adjoint. Then, according to Theorem III.6.26 in \cite{kato} the spectrum $\sigma(A)$ consists of countably many eigenvalues, which are all isolated. The corresponding eigenspaces are finite-dimensional, and the eigenvectors form an orthogonal basis according to Theorem V.2.10 in \cite{kato}.
\end{proof}

The following is an extension of Lemma \ref{lumphil} from $\HH$ to $\XX$.

\begin{lemma}\label{semi_A}
The linear operator $A$ generates a $C_0$-semigroup $(e^{tA})_{t \ge 0}$ of unitary operators in $\XX$.
\end{lemma}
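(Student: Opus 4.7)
My plan is to deduce the statement directly from Proposition~\ref{spec_a} via Stone's theorem. Since Proposition~\ref{spec_a} already establishes that $A$ is skew-adjoint on the complex Hilbert space $\XX$, Stone's theorem (see e.g.\ Theorem~VII.4.1 in Yosida) immediately yields that $A$ is the infinitesimal generator of a strongly continuous one-parameter group $(\e^{tA})_{t\in\R}$ of unitary operators on $\XX$; restricting $t$ to $\R^+_0$ then gives the desired $C_0$-semigroup of unitary operators.

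If one prefers a self-contained Lumer--Phillips argument (so that Stone's theorem need not be invoked), I would proceed as follows. First, $D_\C(A)$ is dense in $\XX$ since it contains the tensor products of smooth compactly supported functions on $(0,L)$ with the scalar slots. Second, skew-symmetry (already shown in the proof of Proposition~\ref{spec_a}) gives
\[
\Real\la Ay,y\ra_\XX=0 \quad\text{for all } y\in D_\C(A),
\]
so both $A$ and $-A$ are dissipative. Third, one needs the range condition $\ran(I\pm A)=\XX$; but Proposition~\ref{spec_a} asserts that $\sigma(A)\subset\ii\R$, hence $\pm 1\in\rho(A)$ and the range condition is automatic. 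Lumer--Phillips (Theorem~1.4.3 in \cite{Pazy}) then yields that $\pm A$ each generate a $C_0$-semigroup of contractions $\e^{\pm tA}$ on $\XX$. These two semigroups commute and satisfy $\e^{tA}\e^{-tA}=I=\e^{-tA}\e^{tA}$, so each $\e^{tA}$ is a bijective isometry on $\XX$, i.e.\ unitary, and $(\e^{tA})_{t\ge 0}$ is the desired $C_0$-semigroup of unitary operators.

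There is no real obstacle here: the spectral theoretic input (skew-adjointness and compact resolvent) has already been done in Proposition~\ref{spec_a}, and Stone's theorem (or equivalently Lumer--Phillips applied to $\pm A$) turns it directly into the semigroup statement. The only place where one has to be mildly careful is in verifying that the natural complex extension of $A$ from $\HH$ to $\XX$ preserves the domain conditions $\xi=Jv'(L)$ and $\psi=mv(L)$ and the skew-symmetry computation, but both are immediate from the formula~\eqref{operator_A} and integration by parts carried out over complex-valued $u,v$.
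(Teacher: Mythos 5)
Your primary argument is exactly the paper's proof: invoke the skew-adjointness of $A$ established in Proposition~\ref{spec_a} and apply Stone's theorem to obtain a unitary $C_0$-group, then restrict to $t\ge 0$. The proposal is correct (and the supplementary Lumer--Phillips route is a valid, standard alternative), so nothing further is needed.
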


\begin{proof}
From Proposition \ref{spec_a} we know that $A$ is skew-adjoint in $\XX$. So we may apply Stone's Theorem, and $(\e^{tA})_{t\ge 0}$ is a $C_0$-semigroup of unitary operators in $\XX$.
\end{proof}

\bigskip

In the following, we turn to the spectral analysis of $\mb$, defined in \eqref{operator_B}. To this end we introduce the Hilbert space
\[
\tilde\XX :=\{w=[u,v,\xi]^\top: u\in\tilde H_0^2(0,L), v\in L^2(0,L), \xi\in\C\},
\]
equipped with the inner product
 \begin{align*}
 \laa w_1, w_2 \raa_{\tilde \XX} & :=  \frac{\Lambda}{2} \int_{0}^{L} 
 u_1''\overline{u_2}'' \dd{}x + \frac{\rho}{2} \int_{0}^{L}{ 
 {v_1}\overline {v_2} \dd{}x} + \frac{1}{2 J} {\xi_1} \overline{\xi_2}.
\end{align*}
The continuation of $\mb$ to $\tilde \XX$ is still denoted by $\mb$ and given by \eqref{def_B}, and has the domain
\[D_\C(\mathcal{B}) :=  \{y \in \tilde\XX \colon 
 u \in \tilde H^{4}_{0}(0, L), v \in \tilde H^{2}_{0} (0,L), \xi = J v'(L),  u'''(L) = 0\}.\]

\begin{proposition} \label{operator_B}
The operator $\mb$, see \eqref{operator_B}, is skew-adjoint and has compact resolvent in $\tilde\XX$. The spectrum $\sigma(\mb)$ consists entirely of isolated eigenvalues $\{\mu_n\}_{n\in\Z\setminus\{0\}}$ located on the imaginary axis, and they have no accumulation point. All eigenspaces are one-dimensional, and the corresponding eigenfunctions form an orthogonal basis of $\tilde\XX$. The normalized eigenfunction associated to $\mu_n$ is given by 
\[
  \Phi_n= \begin{bmatrix}
u_n \\  \mu_n u_n \\  \mu_n J u_n'(L)
\end{bmatrix},\quad n\in\Z\setminus\{0\},
\]
where the real function $u_n \in \tilde{H}^4_0(0,L)$ is the unique (up to normalization) solution of the boundary value problem \eqref{eigen_eqn}. Here, $u_n$ is scaled such that $\|\Phi_n\|_{\tilde\XX}=1$.
\end{proposition}

\begin{proof}
Analogously to the proof of Proposition \ref{spec_a} we show that $0\in\rho(\mb)$, that $\mb^{-1}$ is compact in $\tilde \XX$ and that $\mb$ is skew-adjoint. Now we can apply the Corollary of Theorem VII.3.1 in \cite{Yosida}, which proves that the skew-symmetric operator $\mb$ is even skew-adjoint. According to Theorem III.6.26 in \cite{kato} the spectrum $\sigma(\mb)$ consists of countably many eigenvalues $\{\mu_n\}_{n\in\Z\setminus\{0\}}$, which are all isolated. They come in complex conjugated pairs, i.e.~$\mu_{-n}=\overline{\mu_n}$. The corresponding eigenspaces are finite-dimensional, and the eigenvectors form an orthogonal basis according to Theorem V.2.10 in \cite{kato}. Since $\mb$ is skew-adjoint we have $\sigma(\mb)\subset\ii\R$. Finally, the fact that $\{\Phi_n\}_{n\in\Z\setminus\{0\}}$ is an orthogonal basis of $\tilde \XX$ follows immediately from the application of Theorem V.2.10 in \cite{kato}.

Let $\Phi_n = [u_n , v_n , \xi_n]^{\top} \in D_\C(\mb)$ be an eigenfunction corresponding to $\mu_n$ for $n \in \Z\setminus\{0\}$,  i.e.~$\mb \Phi_n = \mu_n \Phi_n$. Now $\Phi_n$ satisfies the eigenvalue equation iff $u_n$ solves \eqref{eigen_eqn}. The $v_n$ and $\xi_n$ can be determined from $u_n$ via $v_n=\mu_nu_n$ and $\xi_n=J\mu_n u_n'(L)$. The system \eqref{eigen_eqn} has a non-trivial solution iff $\mu_n\in\sigma(\mb)$ (note that we have already shown that $0\notin\sigma(\mb)$, i.e.~we may assume $\mu_n\neq 0$). In this case we get the general solution $u_n\in \tilde H_0^4(0,L)$ of \eqref{ef1} as
\begin{equation}\label{gen_solu}
u_n(x)=C_1[\cosh px - \cos px]+C_2[\sinh px - \sin px],
\end{equation}
where $p=\big(\frac{-\rho\mu_n^2}\Lambda\big)^{\frac 14}>0$, and $C_i\in\R$. Here, we already incorporated the zero boundary conditions at $x=0$. Using the condition $u_n'''(L)=0$ from \eqref{ef2} yields
\[C_1[\sinh pL  - \sin pL]=-C_2[\cosh pL + \cos pL].\]
Since $p\neq 0$ due to $\mu_n\neq 0$, both coefficients are always nonzero. So $C_2$ can always be determined uniquely from $C_1$ via this equation. Thus, if \eqref{eigen_eqn} has a non-trivial solution, it is unique up to multiplicity. This shows that all eigenspaces of $\mb$ are one-dimensional, spanned by the $\Phi_n$. Finally, \eqref{ef3} can be used to determine the $\mu_n$ for which there is a non-trivial solution.
\end{proof}

%%%%%%%%%%%%%%%%%%%%%%%%%%%%%%%%%%%%%%%%%%%%%%%%%%%%%%%%%%%%%
%%%%%%%%%%%%%%%%%%%%%%%%%%%%%%%%%%%%%%%%%%%%%%%%%%%%%%%%%%%%%

\section{Deferred proofs and results}\label{app:B}

In Lemma~\ref{reg_precompactness}, the precompactness of the trajectories $\gamma(y_0)$ is first proven for $C^2$-trajectories. This differentiability (in $t$) follows from sufficient regularity of $y_0$ (see Lemma~\ref{da2} below). The precompactness is then extended to $C^1$-trajectories, with the required density proven in Lemma~\ref{density} below.

\begin{lemma}\label{da2}
Let $y_0 \in D(\mathcal{A}^2)$ and let $y(t)$ be the corresponding solution of \eqref{ivp}. Then $y\in C^2([0,\infty); \mathcal{H})$ and $y_t(t) \in D(\ma)$ for all $t>0$. 
\end{lemma}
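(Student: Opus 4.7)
The plan is to identify $z(t) := y_t(t)$ as a classical solution of a linear non-autonomous Cauchy problem in $\HH$, from which both claims follow at once.

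Since $y_0 \in D(\ma^2) \subset D(\ma) = D(A)$, Lemma \ref{lem_class} already yields a global classical solution $y \in C^1([0,\infty);\HH)$ with $y_t = Ay + \nn(y)$, so in particular $z(0) = \ma y_0 \in D(\ma)$. The idea is now to show that the time-derivative $z$ solves (in the mild sense) the \emph{linearization} of \eqref{ivp} along the trajectory $y$, and then to apply a regularity result to upgrade $z$ to a classical solution.

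To derive the variation-of-constants identity for $z$, I would exploit autonomy: $y(\cdot + h)$ is the mild solution with initial value $y(h)$, and Duhamel's formula (after shifting the integration variable) gives
\[
\frac{y(t+h)-y(t)}{h} = \e^{tA}\,\frac{y(h) - y_0}{h} + \int_0^t \e^{(t-s)A}\,\frac{\nn(y(s+h)) - \nn(y(s))}{h}\dd s.
\]
Passing $h \searrow 0$ via dominated convergence (using $y \in C^1$, the $C^1$-smoothness of $\nn$, and the uniform boundedness of $y$ on $[0,T]$ inherited from Theorem \ref{global_existence}) would then yield
\[
z(t) = \e^{tA}\ma y_0 + \int_0^t \e^{(t-s)A} B(s) z(s)\dd s, \qquad B(s) := \nn'(y(s)) \in \mathcal{L}(\HH).
\]
In other words, $z$ is a mild solution of the linear non-autonomous problem $z_t = Az + B(t)z$ with initial datum $z(0) = \ma y_0 \in D(A)$.

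Finally I would invoke the standard regularity theorem for such time-dependent perturbations (Theorem 6.1.5 in \cite{Pazy}, applied to the $C^1$ map $(t,z)\mapsto B(t)z$, or a direct contraction argument on the linear integral equation): since $k_1, k_2 \in C^2(\R)$ the operator $\nn$ is of class $C^2$ on $\HH$, and combined with $y \in C^1$ this makes $t \mapsto B(t)$ a $C^1$ map into the bounded operators on $\HH$. Together with $z(0) \in D(A)$ this forces the mild solution $z$ to be classical, i.e.\ $z \in C^1([0,\infty);\HH)$, $z(t) \in D(A) = D(\ma)$ for every $t \ge 0$, and $z_t(t) = (A+B(t))z(t)$. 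Reading back, $y \in C^2([0,\infty);\HH)$ and $y_t(t) \in D(\ma)$ for all $t \ge 0$, which is the statement.

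The main technical obstacle I foresee is the dominated-convergence step in the derivation of the integral equation for $z$: producing a uniform-in-$h$ $L^1$-envelope for the difference quotient $\frac{\nn(y(s+h))-\nn(y(s))}{h}$ on $[0,t]$, using the local Lipschitz continuity of $\nn$ and the a priori bound on $\|y_t\|_\HH$ from $y \in C^1$. Once that is in hand, verifying the $C^1$-regularity of $t\mapsto B(t)$ and hence the applicability of the time-dependent version of \cite[Thm.~6.1.5]{Pazy} is essentially routine.
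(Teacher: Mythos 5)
Your proposal is correct and follows essentially the same route as the paper: both recast $z=y_t$ as the unique mild solution of a linear non-autonomous problem $z_t=Az+\tilde\nn(t,z)$ obtained by linearizing $\nn$ along the given trajectory, with initial datum $z(0)=\A y_0\in D(A)$, and then invoke Pazy's theory (Theorems 6.1.2/6.1.5) to conclude that this mild solution is classical, whence $y\in C^2$ and $y_t(t)\in D(\A)$. The only cosmetic differences are that the paper derives the integral equation for $y_t$ by differentiating the Duhamel formula directly (citing the proof of Corollary 4.2.5 in \cite{Pazy}) rather than via your difference-quotient/dominated-convergence argument, and that it freezes the $k_1'$-contribution as a time-dependent inhomogeneity $F(t)$ instead of using the full Fr\'echet derivative $\nn'(y(t))$ --- both variants coincide along $z=y_t$ and lead to the same conclusion.
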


\begin{proof}
First, notice that if $y\in C^2([0,\infty);\mh)$ then $\tilde y:= y_t$ would satisfy
\begin{equation}\label{ivp_2}
\tilde y_t=A \tilde y+\begin{bmatrix}
0\\0\\0\\-k_1'(u(L))\frac{\psi} m-k_2'(\frac\psi m)\frac{\tilde \psi}m
\end{bmatrix}.
\end{equation}
However, for the moment we only know that $y\in C^1([0,\infty);\mh)$, see Lemma \ref{lem_class}.
Motivated by \eqref{ivp_2} we  define the following functions for this fixed $y(t)$:
\begin{align*}
F(t) & := -k_1'(u(t,L))\frac{\psi(t)} m,\\
G(t,z)& := -k_2'\Big(\frac\psi m\Big)\frac{\chi} m\equiv g(t)\chi,
\end{align*}
where $z=[U,V,\zeta,\chi]^\top\in\mh$. Since $y(t)$ is a classical solution, and $k_1,\,k_2\in W_{\mathrm{loc}}^{2,\infty}(\R)$ due to the Assumptions~\ref{ass}, both $F(t)$ and $g(t)$ lie in $W_{\mathrm{loc}}^{1,\infty}(\R)$. Consequently, the operator $\tilde\nn :[0,T]\times\HH\to\HH$, defined by
$\tilde\nn (t,z):=[0,0,0, F(t)+G(t,z)]^\top$, is Lipschitz continuous in both variables, for every $T>0$. In the following we consider the (non-autonomous) initial value problem
\begin{subequations}\label{ivp_z}
\begin{align}
z_t &= Az+\tilde\nn(t,z),\\
z(0)&=z_0\in\mh.
\end{align}
\end{subequations}
We apply Theorem 6.1.2 in \cite{Pazy} which proves that there is a unique global mild solution $z(t)$ of \eqref{ivp_z} for every $z_0\in \HH$. Furthermore, if $z_0\in D(A)$ then $z(t)$ is Lipschitz continuous on $[0,T]$, which follows from the proof of Theorem 6.1.6 in \cite{Pazy}. Consequently, $f\colon t\mapsto \tilde\nn(t,z(t))$ is also Lipschitz continuous on $[0,T]$. We then consider \eqref{ivp_z} where we replace $\tilde\nn$ by $f$. Now, $z(t)$ is also a mild solution of this reformulated evolution problem.  But, according to Corollary~4.2.11 in \cite{Pazy} (since $f$ is Lipschitz and $\HH$ is reflexive), $z(t)$ is even a classical solution of this modified problem. So, $z(t)$ is also a classical solution of \eqref{ivp_z}.

We next show that for the given classical solution $y(t)$ the function $y_t(t)$ is a mild solution of \eqref{ivp_z} for $z_0=\A y_0$. Clearly, $y(t)$ satisfies the Duhamel formula \eqref{duhamel}, and differentiation with respect to $t$ yields
\begin{equation}\label{diffmild}
y_t(t)= \e^{tA}A y_0+\frac{\dd}{\dd t}\int_0^t\e^{(t-s)A}\nn y(s)\dd s.
\end{equation}
According to the proof of Corollary 4.2.5 in \cite{Pazy} there holds
\[\frac{\dd}{\dd t}\int_0^t\e^{(t-s)A}\nn y(s)\dd s=\e^{tA}\nn y_0+\int_0^t\e^{(t-s)A}\frac{\dd}{\dd s}\nn y(s)\dd s.\]
Inserting this in \eqref{diffmild} proves that $y_t(t)$ fulfills the Duhamel formula for \eqref{ivp_z}, and as a consequence $y_t(t)$ is the unique mild solution of \eqref{ivp_z} to the initial condition $z_0=\A y_0$. But from the first part of the proof we know that this mild solution $z(t)=y_t(t)$ is a classical solution of $\eqref{ivp_z}$ if $\A y_0\in D(\A)$, i.e.~$y_0\in D(\A^2)$. So $y_t\in C^1(\R^+;\HH)$ and $y\in C^2(\R^+;\HH)$.
\end{proof}

\begin{remark}
In the situation where the evolution equation is linear and autonomous, i.e.~$\nn=0$ in our case, the above result is straightforward. If $y_0\in D(A^2)$, then we have according to Section II.5.a in \cite{engelnagel} that $y(t)\in D(A^2)$ for all $t\ge 0$. Therefore $y_t(t)=Ay(t)\in D(A)$, and so $y_{tt}=Ay_t = A^2 y$, and $y_{tt}\in C(\R^+)$. There it is crucial that the time derivative and the operator $A$ on the right hand side commute. This does not hold in the nonlinear situation any more, which makes the proof more complicated. According to Section II.5.a in \cite{engelnagel} the density of $D(A^2)$ in $\HH$ is also immediate. In our case $D(\A^2)$ is a nonlinear subset of $\HH$, see \eqref{night6}, so we need to check the density separately.
\end{remark}

We even show the stronger property that $\A|_{D(\A^2)}\subset \A|_{D(\A)}$ is dense in the product topology of $\HH\times \HH$.

\begin{lemma} \label{density}
For any $y \in D(\ma)$, there exists a sequence $\{y_{n}\}_{n \in \N}$ in $D(\ma^2)$ such that $\lim_{n \rightarrow \infty}{y_n} =y$ and $\lim_{n \rightarrow \infty}{\ma y_n} = \ma y$ in $\mh$.
\end{lemma}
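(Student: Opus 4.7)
The plan is to construct the approximating sequence in two stages, exploiting that $D(\mathcal{A}^2)$ and $D(A^2)$ differ only by a single nonlinear scalar boundary condition at the tip $x=L$.

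\textbf{Stage 1: approximation in $D(A^2)$.} Since $A$ generates a $C_0$-semigroup of unitaries (Lemma \ref{lumphil}), the resolvent satisfies $\|n(nI-A)^{-1}\|\le 1$ for all $n>0$ and $n(nI-A)^{-1}y\to y$ in $\mathcal{H}$ for any $y\in\mathcal{H}$. For $y\in D(A)=D(\mathcal{A})$, the identity $A(nI-A)^{-1}y = n(nI-A)^{-1}y - y$ shows $(nI-A)^{-1}y\in D(A^2)$, so $y_n^0 := n(nI-A)^{-1}y\in D(A^2)$. Commuting $A$ with $(nI-A)^{-1}$ on $D(A)$ gives $Ay_n^0 = n(nI-A)^{-1}Ay\to Ay$ in $\mathcal{H}$, and continuity of $\mathcal{N}$ yields $\mathcal{A}y_n^0\to \mathcal{A}y$. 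However, writing $y_n^0 = [u_n^0,v_n^0,\xi_n^0,\psi_n^0]^\top$, membership in $D(A^2)$ only forces the \emph{linear} relation $(u_n^0)'''(L)=-(m/\rho)(u_n^0)^{(4)}(L)$, whereas a direct computation of $\mathcal{A}y \in D(\mathcal{A})$ shows that $y\in D(\mathcal{A}^2)$ must satisfy the \emph{nonlinear} relation $u'''(L) = -(m/\rho)u^{(4)}(L) + \Lambda^{-1}[k_1(u(L))+k_2(v(L))]$.

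\textbf{Stage 2: boundary-layer correction.} The plan is to repair this mismatch by adding to the $u$-component of $y_n^0$ a smooth bump supported near $L$ that shifts $u^{(4)}(L)$ by the right amount. Fix a cutoff $\theta\in C^\infty_c(\mathbb{R})$ with $\theta(0)=1$ and $\theta'(0)=\theta''(0)=0$, and set
\[
 \Delta_n := \tfrac{\rho}{m\Lambda}\bigl[k_1(u_n^0(L))+k_2(v_n^0(L))\bigr], \qquad \phi_n(x) := \tfrac{\Delta_n}{24}(L-x)^4\,\theta\bigl(n(L-x)\bigr), \qquad y_n := y_n^0 + [\phi_n,0,0,0]^\top.
\]
Since $u_n^0\to u$ in $H^4$ and $v_n^0\to v$ in $H^2$, the trace theorem together with continuity of $k_1,k_2$ keep $\Delta_n$ uniformly bounded. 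The support of $\phi_n$ lies in $[L-1/n,L]$, so all boundary conditions at $x=0$ are automatically preserved. The Taylor expansion of $s\mapsto s^4\theta(ns)$ at $s=0$ gives $\phi_n^{(k)}(L)=0$ for $k\in\{0,1,2,3,5\}$ and $\phi_n^{(4)}(L)=\Delta_n$, where the vanishing at $k=5$ uses $\theta'(0)=\theta''(0)=0$. These values guarantee that the $D(\mathcal{A}^2)$ relation $u''(L)=(J/\rho)u^{(5)}(L)$ is undisturbed, and the choice of $\Delta_n$ exactly matches the nonlinear discrepancy, so a direct check gives $y_n\in D(\mathcal{A}^2)$.

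\textbf{Convergence and main obstacle.} What remains is the estimate $\|\phi_n\|_{H^4}\to 0$, from which $y_n\to y$ in $\mathcal{H}$ (via $\phi_n\to 0$ in $H^2$) and $\mathcal{A}y_n\to \mathcal{A}y$ in $\mathcal{H}$ (the $v$-slot uses $\phi_n^{(4)}\to 0$ in $L^2$, the $\xi$-slot uses $\phi_n''(L)=0$, the $\psi$-slot depends continuously on preserved traces). Writing $G(s):=s^4\theta(ns)$, the Leibniz formula expresses each term of $G^{(k)}(s)$ as $n^{k-j}s^{4-j}\theta^{(k-j)}(ns)$ with $0\le j\le\min(k,4)$; since $(ns)^{4-j}\le 1$ on $[0,1/n]$, we obtain $\|G^{(k)}\|_{L^2(0,1/n)} = O(n^{-1/2})$ for $k\le 4$, and hence $\|\phi_n\|_{H^4} = O(|\Delta_n|\,n^{-1/2})\to 0$. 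The delicate point, and the reason a single-stage mollification of $u$ will not work, is precisely the tension between the pointwise value $\phi_n^{(4)}(L)=\Delta_n$ (which has a nonzero limit in general) and the required smallness of $\phi_n$ in $H^4$; this is compatible only because $H^4(0,L)$ fails to embed continuously into $C^4([0,L])$, so pointwise values of the fourth derivative are not controlled by the $H^4$-norm, and one exploits this by letting the support of $\phi_n$ shrink at rate $1/n$.
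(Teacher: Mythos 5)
Your proof is correct, but it follows a genuinely different route from the paper's. The paper proceeds entirely by hand: it writes out the six conditions characterizing $D(\ma^2)$, approximates $v$ by functions in $\tilde C^\infty_0(0,L)$, and then builds $u_n$ as an explicit degree-$11$ polynomial $h_n$ (whose eight free coefficients are fixed by an invertible linear system encoding the boundary conditions at $0$ and $L$, including the nonlinear one) plus a $C^\infty_0$-perturbation of $u-h_n$ in $H^4_0(0,L)$. You instead observe that $D(\ma^2)$ and $D(A^2)$ differ in exactly one scalar boundary relation at $x=L$, use the Yosida-type approximation $n(nI-A)^{-1}y$ to obtain convergence in the graph norm of $A$ together with membership in $D(A^2)$ (which settles all the linear domain conditions in one stroke), and then repair the single nonlinear condition with a shrinking bump $\phi_n$ whose fourth derivative at $L$ is prescribed while $\|\phi_n\|_{H^4}=O(n^{-1/2})$. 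Your argument is shorter and more conceptual — it cleanly separates the standard density of $D(A^2)$ in $D(A)$ for a generator from the one genuinely nonlinear compatibility condition, and it would adapt with no extra work to other nonlinearities in the $\psi$-slot; the paper's construction is more elementary (no resolvent estimates) and yields smooth approximants, at the price of a tedious verification that the $6\times 6$ interpolation system is nonsingular. Two small points you should make explicit: take $\operatorname{supp}\theta\subset[-1,1]$ and $n>1/L$ so that $\phi_n$ vanishes near $x=0$ and the clamped-end conditions $u_n^{\rom{4}}(0)=u_n^{\rom{5}}(0)=0$ are untouched; and the claim $u_n^0\to u$ in $H^4$ (used for the traces entering $\Delta_n$ and the fourth component of $\ma y_n$) deserves the one-line justification that graph-norm convergence gives $(u_n^0)''\to u''$ and $(u_n^0)^{\rom{4}}\to u^{\rom{4}}$ in $L^2$, whence $H^4$-convergence by interpolation of the intermediate derivative. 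Neither affects the validity of the proof.
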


\begin{proof}
First we characterize $D(\ma^2)$. We use that $y\in D(\ma^2)$ if and only if $y\in D(\ma)$ and  $\ma y\in D(\ma)$, or equivalently
\begin{align}
v &\in \tilde H_{0,\R}^4(0,L),\label{night4}\\
u\in \tilde H_{0,\R}^6(0,L) \,\wedge \, u^{\rom{4}}(0)&= u^{\rom{5}}(0)=0,\label{night5}\\
\xi&=Jv'(L),\label{night1}\\
\psi &=mv(L),\label{night2}\\
 u''(L)&= \frac{J}{\rho}u^{\rom{5}}(L),\label{night3}\\
\Lambda u'''(L)-k_1(u(L))-k_2\big(\frac\psi m\big)&=-\frac{m\Lambda}{\rho}u^{\rom{4}}(L).\label{night6}
\end{align}
It suffices to show that for an arbitrary $y\in D(\ma)$ we can construct ${\{y_n\}_{n \in \mathbb{N}} \subset D(\ma^2)}$ such that $y_n = [ u_n, \, v_n, \, \xi_n, \, \psi_n]^{\top}$ converges to $y$ in the space $H^4(0,L) \times H^2(0,L) \times \R^{2}$.
Since $\tilde{C}^{\infty}_0(0,L):=\{f \in C^{\infty}(0, L) \colon f^{(k)}(0) = 0,\allowbreak \forall k \in   \N \cup\{0\}\}$ is dense in $\tilde{H}^2_0(0,L)$ (see Theorem 3.17 in \cite{Adams}), there exists a sequence $\{v_n\}_{n \in \mathbb{N}} \subset \tilde{C}^{\infty}_0(0, L)$ such that $\lim_{n \to\infty}{v_n} = v$ in $H^2(0,L)$. Clearly $v_n\in\tilde H_{0,\R}^4(0,L)$ for all $n\in\N$. Defining $\xi_n := J v_n'(L)$ and $\psi_n := m v_n(L)$ ensures that $y_n$ satisfies \eqref{night1} and \eqref{night2}. Moreover, the Sobolev embedding $H^2(0,L) \inj C^1([0,L])$ implies that $\lim_{n \to\infty}{\xi_n} = \xi$ and $\lim_{n \to\infty}{\psi_n} = \psi$.

As a final step, we construct a sequence $\{ u_n \}_{n \in \mathbb{N}} \subset C^{\infty}(0,L)$ such that $u_n$ satisfies \eqref{night5}, \eqref{night3}, and \eqref{night6} for all $n \in \mathbb{N}$, and $\lim_{n \to \infty}{u_n} = u$ in $H^4(0,L)$.
For this purpose, first we introduce the polynomial, for $n\in\N$: 
\[h_n(x) := h_{2,n} x^2 + h_{3,n} x^3 + h_{6,n} x^6 + h_{7,n} x^7 + h_{8,n} x^8 + h_{9,n} x^9 + h_{10,n} x^{10} + h_{11,n} x^{11}.\] We next show that the coefficients $h_{2,n}, \dots, h_{11,n} \in \R$ can be uniquely determined for every $n\in\N$, given certain boundary conditions on $h_n$. In the following we fix $n\in\N$ arbitrary. From the definition of $h_n$ it is already immediate that 
\begin{equation} \label{bc_0}
h_n(0) = h_n'(0) = h_n^{\rom{4}}(0) = h_n^{\rom{5}}(0) = 0,
\end{equation} holds. Then we set $h_{2,n} = \frac{u''(0)}{2}$ and $h_{3,n} = \frac{u'''(0)}{6}$, which is equivalent to 
\begin{equation} \label{bc_00}
h_n''(0) = u''(0),\,\, h_n'''(0) = u'''(0).
\end{equation}
Assume further that 
\[
h_n^{(k)}(L) = u^{(k)}(L), \qquad k \in \{0,1,2,3\},
\]
which reads equivalently\footnote{Here we use the notation $k^{\underline{l}} : = k!/(k-l)!$ for $k,l \in \N$, $k\ge l$.}:
\begin{subequations} \label{cond1}
\begin{align}
  r_1&=h_{n,6} + h_{n,7} L + h_{n,8} L^2 + h_{n,9} L^3 + h_{n,10} L^{4} + h_{n,11} L^{5} \\
 r_2&=6 h_{n,6} + 7 h_{n,7} L + 8 h_{n,8} L^2 + 9 h_{n,9} L^3 + 10 h_{n,10} L^{4} + 11 h_{n,11} L^{5} \\
 r_3&=6^{\underline{2}} h_{n,6} + 7^{\underline{2}} h_{n,7} L + 8^{\underline{2}} h_{n,8} L^2 + 9^{\underline{2}} h_{n,9} L^3\nonumber\\&\qquad + 10^{\underline{2}} h_{n,10} L^{4} + 11^{\underline{2}} h_{n,11} L^{5}  \\
 r_4&=6^{\underline{3}} h_{n,6} + 7^{\underline{3}} h_{n,7} L + 8^{\underline{3}} h_{n,8} L^2 + 9^{\underline{3}} h_{n,9} L^3 \nonumber\\&\qquad+ 10^{\underline{3}} h_{n,10} L^{4} + 11^{\underline{3}} h_{n,11} L^{5} ,
\end{align}
\end{subequations}
where 
\begin{align*}
r_1 =  \frac{u(L)}{L^6} - \frac{u''(0)}{2 L^4} - \frac{u'''(0)}{6 L^3},&\quad
r_2  =  \frac{u'(L)}{L^5} - \frac{u''(0)}{L^4} - \frac{u'''(0)}{2 L^3},\\
r_3  =  \frac{u''(L)}{L^4} - \frac{u''(0)}{L^4} - \frac{u'''(0)}{L^3},&\quad
r_4  =  \frac{u'''(L)}{L^3} - \frac{u'''(0)}{L^3}.
\end{align*}
Finally the two additional conditions are imposed on $h_n$: 
\begin{align}
\frac{m \Lambda}{\rho} h_n^{\rom{4}}(L) &= -\Lambda u'''(L) + k_1(u(L)) + k_2(\frac{\psi_n}{m}),\label{bc_1}\\
\frac{J}{\rho} h_n^{\rom{5}}(L) &= u''(L).\label{bc_11}
\end{align}
\eqref{bc_1} and \eqref{bc_11} are equivalent to:
\begin{subequations} \label{cond2}
\begin{align}
6^{\underline{4}} h_{n,6} + 7^{\underline{4}} h_{n,7} L + 8^{\underline{4}} h_{n,8} L^2 + 9^{\underline{4}} h_{n,9} L^3 + 10^{\underline{4}} h_{n,10} L^{4} + 11^{\underline{4}} h_{n,11} L^{5} & = r_5, \\
6^{\underline{5}} h_{n,6} + 7^{\underline{5}} h_{n,7} L + 8^{\underline{5}} h_{n,8} L^2 + 9^{\underline{5}} h_{n,9} L^3 + 10^{\underline{5}} h_{n,10} L^{4} + 11^{\underline{5}} h_{n,11} L^{5} & = r_6,
\end{align}
\end{subequations}
with
\begin{align*}
r_5 =  \rho \frac{- \Lambda u'''(L) + k_1(u(L)) + k_2(\frac{\psi_n}{m})}{\Lambda m L^2},&\quad
r_6  =  \frac{\rho u''(L)}{J L}.
\end{align*}
The linear system consisting of \eqref{cond1} and \eqref{cond2} has a strictly positive determinant. Hence, its solution $h_n$ exists and is unique.
Consequently, \eqref{bc_0}, \eqref{bc_00}, and \eqref{cond1} imply that $u - h_n \in H^4_0(0,L)$, for all $n \in \N$. Since $C^{\infty}_0(0,L)$ is dense in $H^4_0(0,L)$, there exists a sequence $\{\tilde{u}_n\}_{n \in \mathbb{N}} \subset C^{\infty}_0(0,L)$ such that $\|\tilde{u}_n - (u - h_n)\|_{H^4} < \frac{1}{n}$, $\forall n \in \N$.  Now defining $u_n = \tilde{u}_n + h_n$, gives $\lim_{n \to \infty}{u_n} = u$ in $H^4(0,L)$. Obviously $u_n$ satisfies \eqref{night5} for all $n \in \mathbb{N}$. Also due to \eqref{bc_1} and \eqref{bc_11}, $u_n$ satisfies \eqref{night3} and \eqref{night6}, as well. The statement follows.
\end{proof}

\begin{proof}[\underline{Proof of Lemma \ref{aux_A}}]
We first consider $y_0\in D(\A)$. Then $S(t)y_0$ is the classical solution of \eqref{ivp}, and satisfies the integrated equation:
\[S(t)y_0-y_0=\int_0^t AS(s)y_0\dd s + \int_0^t \nn S(s)y_0\dd s.\]
Since $S(t)y_0\in C^1(\R^+;\HH)$ and $\nn$ is locally Lipschitz continuous, we find that both $t\mapsto \nn S(t)y_0$ and $t\mapsto \mathcal AS(t)y_0$ are continuous, so $AS(t)y_0\in C(\R^+;\HH)$. Therefore we may write for any $t>0$:
\begin{align*}
\int_0^t S(s)y_0\dd s &= \lim_{N\to\infty}\sum_{j=1}^N\frac tNS\Big(\frac {jt}{N}\Big)y_0,\\
\int_0^t AS(s)y_0\dd s &= \lim_{N\to\infty}\sum_{j=1}^N\frac tNAS\Big(\frac {jt}{N}\Big)y_0=\lim_{N\to\infty}A\sum_{j=1}^N\frac tNS\Big(\frac {jt}{N}\Big)y_0,
\end{align*}
due to the linearity of $A$. Since $A$ is skew-adjoint, see Proposition \ref{spec_a}, it is closed. So we obtain that
\[
\int_0^t S(s)y_0\dd s \in D(A),\quad A\int_0^t S(s)y_0\dd s=\int_0^t AS(s)y_0\dd s.
\] So there holds \eqref{A2} for $y_0\in D(\A)$ and any $t>0$.

Now let $y_0\in\HH\setminus D(\A)$, and $\{y_{n,0}\}\subset D(\A)$ such that $y_{n,0}\to y_0$ as $n\to\infty$. For every $T>0$ we have $S(t)y_{n,0}\to S(t)y_0\in C( [0,T],\HH)$. Since $\nn$ is locally Lipschitz continuous, we get for every $t>0$:
\begin{align*}
\lim_{n\to\infty}(S(t)y_{n,0}-y_{n,0}) &= S(t)y_0-y_0,\\
\lim_{n\to\infty}\int_0^t\nn S(s)y_{n,0}\dd s &= \int_0^t\nn S(s)y_{0}\dd s .
\end{align*}
Applying those two limits in \eqref{A2} for $y_{n,0}$ we obtain:
\begin{align*}
\lim_{n\to\infty}A\int_0^tS(s)y_{n,0}\dd s &= S(t)y_0 - y_0- \int_0^t \nn S(s)y_0\dd s.
\end{align*}
But there also holds \[\lim_{n\to\infty}
\int_0^tS(s)y_{n,0}\dd s = \int_0^t S(s)y_{0}\dd s.\]
Since $A$ is closed, these last two limits prove \eqref{A1} and \eqref{A2} for $y_0\in \HH\setminus D(\A)$.
\end{proof}

\begin{proof}[\underline{Proof of Proposition~\ref{null_set}}]
For a fixed $y_0 \in \Omega$ let $y(t) = S(t)y_0$. Since $\Omega$ is $S$-invariant we have $ V(y(t)) = \nu(y_0)$ for all $t \ge 0$.
First we show that
\begin{equation} \label{V_null}
                    \psi(t) = 0, \qquad \forall t\ge0.
                   \end{equation}
In the case when $y_0 \in \Omega \cap D(\mathcal{A})$, \eqref{V_null} follows easily since \eqref{Lyapunov_decay} implies for the corresponding classical solution \[\dot{V}(y(t)) = 0 \quad \Leftrightarrow \quad \psi(t) = 0.\]
We next investigate the case when $y_0 \in \Omega \setminus D(\mathcal{A})$. Then there is a sequence $\{y_{n,0}\}_{n \in \mathbb{N}}\subset D(\ma)$ such that $\lim_{n \rightarrow \infty}{y_{n,0}} = y_0$ in $\HH$. Theorem \ref{global_existence} implies ${y_n(t) \to y(t)}$ in $C([0,T] ; \mh)$ for any $T>0$, where $y_n(t) = S(t)y_{n,0}$. Since $V$ is locally Lip\-schitz continuous in $\mh$,
$\left\{ V(y_n(t))\right\}_{n \in \mathbb{N}}$ is a Cauchy sequence in $C([0,T];\mathbb{R})$. On the other hand we also have the convergence
\begin{equation} \label{psi_conv}
\psi_n(t) \rightarrow \psi(t) \,\,\, \text{ in } \,\,\,C([0,T] ; \mathbb{R}).
\end{equation}
Due to \eqref{Lyapunov_decay} this implies that 
\[\Big\{ \frac{\dd}{\dd t}V(y_n(t))\Big\}_{n \in \mathbb{N}}\] is a Cauchy sequence in $C([0,T]; \mathbb{R})$. We conclude that $\left\{ V(y_n(t))\right\}_{n \in \mathbb{N}}$ is a Cauchy sequence in $C^1([0,T];\mathbb{R})$.
So there exists a unique $w \in C^1([0,T];\mathbb{R})$ such that 
\begin{equation} \label{c_1_conv}
 V(y_n(t)) \rightarrow w(t) \,\,\, \text{ in } \,\,\, C^1([0,T]; \mathbb{R}).
\end{equation}
On the other hand, we know that $V(y_n(t)) \rightarrow V(y(t)) = \nu(y_0)$ for every $t \ge 0$, and hence $w(t) \equiv \nu(y_0)$. This, combined with \eqref{c_1_conv}, implies 
${\dot{V}(y_n(t)) = - k_2(\frac{\psi_n}{m}) \frac{\psi_n}{m} \to 0}$ uniformly on $[0,T]$. With \eqref{psi_conv} this now yields \eqref{V_null} and in particular $\psi(0)=0$.

We now show that $u(t,L) = 0$ for all $t \ge 0$. From \eqref{A1} and \eqref{V_null} it follows that
\[ m \left( \int_0^t{v(s) \dd{}s} \right) \bigg|_{x=L} = \int_0^t{\psi(s) \dd{}s} = 0.\]
Using this, the first component of \eqref{A2} implies
\[0 = \left( \int_0^t{v(s) \dd{}s} \right) \bigg|_{x=L} = u(t,L) - u(0,L).\]
Therefore $u(t, L)$ is constant along $y(t)$, which implies
\begin{equation}\label{lin_growth}
\int_0^t u(s,L)\dd s = u_0(L) t,\quad t\ge 0.
\end{equation}
Since $\sup_{t>0}\|y(t)\|_\HH<\infty$, it follows that $\sup_{t>0}\|v(t)\|_{L^2(0,L)}<\infty$. Therefore, the second component of \eqref{A2} implies
\begin{equation}\label{dot_h_4}
\sup_{t \ge 0}{\Big\|\Big( \int_0^t{u(s) \dd{} s}\Big)^{\rom{4}}\Big\|_{L^2(0,L)}} < \infty.
\end{equation}
We next apply the following Gagliardo-Nirenberg inequality (cf.~\cite{nirenberg}), which guarantees the existence of $C>0$ such that there holds for all $t\ge 0$:
\begin{equation}\label{gag_nir}
\Big\|\int_0^t u(s) \dd s\Big\|_{L^\infty(0,L)} \le C \Big\|\Big( \int_0^t u(s) \dd{} s \Big)^{\rom{4}}\Big\|_{L^2(0,L)}^{\frac 18}\Big\|\int_0^t u(s) \dd{} s\Big\|_{L^2(0,L)}^{\frac 78}.
\end{equation}
The first factor on the right hand side is uniformly bounded due to \eqref{dot_h_4}. For the second factor we observe that, according to Theorem \ref{global_existence}, the map $t\mapsto\|u(t)\|_{L^2(0,L)}$ is uniformly bounded, and therefore $t\mapsto \|\int_0^t u(s)\dd s\|_{L^2(0,L)}$ grows at most linearly. Altogether this implies in \eqref{gag_nir} that the function $t\mapsto\int_0^t u(s,L) \dd s$ grows at most like $t^{\frac 78}$. But this contradicts \eqref{lin_growth} unless $u_0(L)=0$. This shows that $u(t,L)=0$ for all $t\ge 0$.
\end{proof}

\begin{proof}[\underline{Proof of Lemma \ref{ef_boundary}}]
The general solution $\varphi \in \tilde{H}^4_0(0,L)$ to \eqref{ef1} is of the form \eqref{gen_solu}, with $p=\big(\frac{-\rho\mu^2}\Lambda\big)^{\frac 14}>0$. The boundary conditions \eqref{ef2} and  \eqref{ef3} are now equivalent to the following two equations for $C_1$ and $C_2$:
\begin{equation} \label{ef2_coeff}
C_1 \left( \sinh{p L} - \sin{p L} \right) + C_2 \left( \cosh{p L} + \cos{p L} \right) = 0,
\end{equation}
and
\begin{align} \label{ef3_coeff}
C_1 \left[  J \mu^2 \left( \sinh{p L} + \sin{p L} \right) + p \Lambda \left( \cosh{p L} + \cos{p L} \right) \right] &  \nonumber \\ + C_2 \left[  J \mu^2 \left( \cosh{p L} - \cos{p L} \right) + p \Lambda \left( \sinh{p L} + \sin{p L} \right) \right] &= 0.
\end{align}
Furthermore, the additional condition $\varphi(L) = 0$ reads
\begin{equation} \label{ef4_coeff}
C_1 \left( \cosh{p L} - \cos{p L} \right) + C_2 \left( \sinh{p L} - \sin{p L} \right) = 0.
\end{equation}
We first use \eqref{ef2_coeff} and \eqref{ef4_coeff} to determine the constants $C_1$ and $C_2$. In order for $\varphi$ to be non-zero the determinant of the linear system formed by \eqref{ef2_coeff} and \eqref{ef4_coeff} needs to vanish, i.e.
\begin{align*}
(\sinh{p L} - \sin{p L})^2 - (\cosh{p L} - \cos{p L})&(\cosh{p L} + \cos{p L})  \\ 
&=-2 \sinh{p L} \sin{p L} = 0.
\end{align*}
Since $pL>0$, this is true iff $p = \frac{\ell \pi}{L}$ for some $\ell \in \mathbb{N}$. Hence $\mu^2 = -{\frac{\Lambda}{\rho}} \left( \frac{\ell \pi}{L} \right)^4$. Now \eqref{ef2_coeff} gives $C_2 = - C_1 \frac{\sinh{\ell \pi} }{\cosh{\ell \pi} + (-1)^\ell}$. Now we investigate in which situation also the third condition \eqref{ef3_coeff} is fulfilled. Multiplying \eqref{ef3_coeff} by $\frac{(-1)^\ell \cosh{\ell \pi} + 1}{2 C_1}$, we get 
\begin{align*}
-J{\frac{\Lambda}{\rho}} \left( \frac{\ell \pi}{L} \right)^4\sinh \ell\pi +\frac{\ell\pi\Lambda}L[\cosh \ell\pi+(-1)^\ell]=0,
\end{align*}
and equivalently 
\[
J = \rho \left( \frac{L}{\ell \pi} \right)^3
\frac{\cosh{\ell \pi} + (-1)^\ell}{\sinh{\ell \pi}}.
\]
In this case, the eigenfunction $\varphi$ is given by (up to normalization) 
\begin{equation} \label{u_n_ast}
\varphi(x) = \Big( \cosh{\frac{\ell \pi x}{L}} - \cos{\frac{\ell \pi x}{L}}  \Big) - \frac{\sinh{\ell \pi}}{\cosh{\ell \pi} + (-1)^\ell} \Big( \sinh{\frac{\ell \pi x}{L}} - \sin{\frac{\ell \pi x}{L}} \Big).
\end{equation}
\end{proof}

\bibliographystyle{AIMS}\providecommand{\href}[2]{#2}
\providecommand{\arxiv}[1]{\href{http://arxiv.org/abs/#1}{arXiv:#1}}
\providecommand{\url}[1]{\texttt{#1}}
\providecommand{\urlprefix}{URL }

\end{document}